\newtheorem{theorem}{Theorem}
\newtheorem{thm}{Theorem}
\newtheorem{lemma}{Lemma}
\begin{document}

\title[Schr\"{o}dinger operators on star graphs]{Schr\"{o}dinger operators on star graphs with singularly scaled potentials supported near the vertices}%
\author{S. S. Man'ko}%
\address{Institute for Applied Problems of Mechanics and Mathematics, 3b Naukova st., 79601 L'viv, Ukraine}%
\email{stepan.manko@gmail.com}%


\begin{abstract}
We study Schr\"{o}dinger operators on star metric graphs with potentials of the form $\alpha\varepsilon^{-2}Q(\varepsilon^{-1}x)$.
In dimension 1 such potentials, with additional assumptions on $Q$, approximate in the sense of distributions as $\varepsilon\to0$ the
first derivative of the Dirac delta-function.
We establish the convergence of the Schr\"{o}dinger operators in the uniform resolvent topology and show that the limit operator depends on $\alpha$ and $Q$ in a very nontrivial way.
\end{abstract}
\maketitle
\section{Introduction}
Currently, there is considerable interest in theory of differential operators on graphs.
The reason for this lies in a great deal of progress in fabricating graph-shaped structures of submicron sizes, for which differential operators on graphs represent a natural model.
A case of special interest arises when operators appearing in such models have coefficients supported on graph vertices.
Such singular operators are widely used in
various application to atomic, nuclear, and solid state physics  (see the survey \cite{Kuchment02} for details).
By a quantum graph we mean a metric graph together with a second
order self-adjoint differential operator, which is determined
by differential operators on the edges and certain interface conditions at the vertices.
A quantum graph is a natural generalization of the
one-dimensional Schr\"{o}dinger operator; it can be applied to describe the transition of a quantum particle along the graph and
serves as a model of wave propagation in ``thin'' media.
The differential operators determine the motion of the quantum particle along the edges, while
the interface conditions describe the movement across the vertices.
The interface conditions have to be chosen to make the Hamiltonian
self-adjoint or, in physical terms, to ensure the conservation of the probability current at the vertex.
Basic notions and many
references on this subject can be found in \cite{KostrykinSchrader99, KostrykinSchrader00, NaimarkSolomyak00, Solomyak03, Solomyak04}.

The idea to investigate quantum mechanics of particles confined to a graph originated
with the study of free-electron models of organic molecules \cite{Pauling36, Platt49, RichardsonBalazs72}.
Quantum graphs have recently found numerous applications in physics, chemistry, engineering and quantum computing.
Among the systems that
were successfully modeled by quantum graphs, we mention, e.g. single-mode acoustic and
electromagnetic waveguide networks \cite{FlesiaJohnstonKunz87}, Anderson transition \cite{Anderson81} and quantum Hall systems
with long-range potential \cite{ChalkerCoddington88}, fracton excitations in fractal structures \cite{AvishaiLuck92}, and mesoscopic
quantum systems \cite{KowalSivanEntinWohlmanImry90}.
Applications also arise in quantum wires, quantum chaos and
photonic crystals (see \cite{KostrykinSchrader03, KottosSmilansky97, KuchmentKunyansky02}).
For surveys of physical systems giving rise to boundary value problems on graphs
see \cite{Kuchment02, Kuchment04} and the references
given there.

Many important one-dimensional models have their analogues on graphs.
One of the best studied graph Laplacians is given by the Schr\"{odinger} differential expressions on the graph edges and the $\delta$ coupling at $N$ edge vertices
\begin{equation*}
    \psi_1(a)=\ldots=\psi_N(a),\qquad \sum\limits_{n=1}^N\psi'_n(a)
    =\alpha\psi(a)
\end{equation*}
(see, e.g., \cite{Exn1, Exn2, Exn3, Exn4}).
Such a model is a generalization of the Schr\"{o}dinger operator with the potential $\alpha\delta(x)$.
The waveguide approximation to the $\delta$ coupling was proposed in \cite{ExnerPost09}.

Our aim in this paper is to study a family of Schr\"{o}dinger operators of the form
\begin{equation}\label{Regularization}
-\frac{\mathrm{d}^2}{\mathrm{d} x^2}
+\alpha\varepsilon^{-2}Q(\varepsilon^{-1}x)
\end{equation}
on a star metric graph.
Here $\varepsilon$ is a small positive parameter, $\alpha\in\mathbb{R}$, and $Q$ is a real-valued integrable function.
We shall establish the convergence of this family as $\varepsilon\to0$ in the norm resolvent topology.
In this paper we generalize the results of \cite{GolHry}, where such a problem was considered on the line (the graph with two edges).
In dimension one such potentials are often referred to as $\delta'$-like potentials, since
if $Q$ has zero mean and its first moment is equal to $-1$, then the sequence $\varepsilon^{-2}Q(\varepsilon^{-1}x)$ converges in the sense of distributions as $\varepsilon\to0$ to $\delta'(x)$.
Here $\delta'$ is the first derivative of the Dirac delta-function.
The Schr\"{o}dinger operators with the Dirac delta-function and its derivatives in potentials have widely been used in quantum mechanics and mathematical physics; see \cite{Alb2ed, AlbKur} and the references given there in for various physical models leading to such potentials.

In 1986 \v{S}eba~\cite{Seb} considered a formal Schr\"{o}dinger
operator with the potential $\delta'(x)$; he approximated $\delta'$ by regular potentials $\varepsilon^{-2}Q(\varepsilon^{-1}\cdot)$ and then investigated the convergence of the corresponding family of regular Schr\"{o}dinger operators of the form~\eqref{Regularization}.
\v{S}eba claimed that the regularized operators converge in the
uniform resolvent sense to the direct sum of the unperturbed
half-line Schr\"{o}dinger operators subject to the Dirichlet boundary conditions at the origin.
From the viewpoint of the scattering theory it means that the $\delta'$-barrier is completely opaque, i.e.,  in the limit $\varepsilon\to0$ the potential  $\varepsilon^{-2}Q(\varepsilon^{-1}\cdot)$ becomes a totally reflecting wall at the origin splitting the system into two independent subsystems lying on the half-lines $\mathbb{R}_-$ and $\mathbb{R}_+$.

Until recently, it was thought that the $\delta'$-potential is physically trivial in the above sense.
However, in 2003 Zolotaryuk et al. \cite{ChrArnZolErmGai} observed resonances in the transmission probability for piecewise cons\-tant $\delta'$-like potentials,
which conflicted with conclusions reached by \v{S}eba.
Although in \cite{ChrArnZolErmGai} the total reflection for the limiting $\delta'$-potential was demonstrated to take place for almost all $\alpha\in\mathbb{R}$, the authors found a discrete set of $\alpha$ (the roots of a transcendent equation depending on $Q$), for which partial transmission
through the limiting $\delta'$-potential occurs.
Exactly solvable models with other piecewise constant potentials as well as nonrectangular
regularizations of $\delta'$  have later been studied in \cite{Zol1}--\cite{Zol5} and the same conclusion has been drawn.
All these findings were generalized in \cite{Man1}, where the scattering on an arbitrary potential of the form $\alpha\varepsilon^{-2}Q(\varepsilon^{-1}x)$ with $Q$ supported by $[-1,1]$ has been considered.
It was proved that such a potential is asymptotically transparent only if the constant $\alpha$ is such that the problem
\begin{equation}\label{0ResonantSet}
    -f''+\alpha Qf=0\quad\text{on}\quad(-1,1),\qquad f'(-1)=f'(1)=0
\end{equation}
admits a nontrivial solution.

In \cite{GolMan1, GolMan2} the authors studied Schr\"{o}dinger operators of the form
\begin{equation*}
    -\frac{\mathrm{d}^2}{\mathrm{d} x^2} +\alpha\varepsilon^{-2} Q(\varepsilon^{-1}x)+ q(x)
\end{equation*}
with $q$ being a real-valued potential tending to $+\infty$ as $|x|\to\infty$; such a behavior ensures that the spectrum of the considered operators is discrete.
For each pair $(\alpha,Q)$ a limit selfadjoint operator was constructed there.
The choice of the limit operator was determined by proximity of its eigenvalues and eigenfunctions to those for the Schr\"{o}dinger operators with regularized potentials for small $\varepsilon$.
It was established that for a fixed $Q$ and almost all constants $\alpha\in\mathbb{R}$ the limit operator is just the direct sum of the Schr\"{o}dinger operators with potential $q$ on half-axes subject to the Dirichlet boundary conditions at the origin.
But in the exceptional case, when problem~\eqref{0ResonantSet} admits a nontrivial solution $f_\alpha$,
functions $\psi$ in the domain of the limit operators satisfy the interface conditions
\begin{equation*}
    f_\alpha(-1)\psi(+0)=f_\alpha(1)\psi(-0),\qquad f_\alpha(1)\psi'(+0)=f_\alpha(-1)\psi'(-0).
\end{equation*}

Studies of \cite{GolMan1, GolMan2} were continued in \cite{GolHry}, where the question on correct definition of the formal Schr\"{o}\-din\-ger operator with the potential $\delta'$ was finally answered.
The authors not only pointed out a mistake in \v{S}eba's proof, but also showed that the operators considered in \cite{GolMan1, GolMan2} converge in the
uniform resolvent sense as $\varepsilon\to0$ to the limit one obtained there.
The results of \cite{GolHry} were derived for the special case $q=0$; but the same can be obtained without difficulty for $q\neq0$ as well.
In \cite{AlbCacFin07, CacExn07} the problem of approximating a smooth quantum waveguide with a quantum graph was analyzed;
the authors encountered the question on the
convergence of a family of one-dimensional Schr\"{o}dinger operators of the form \eqref{Regularization} and obtained similar results
under the assumption that the mean value of $Q$ is different from zero.
For a treatment of a more general case of the $\alpha\delta'+\beta\delta$-like potential we refer the reader to the recent papers \cite{Gol1, Gol2}.

In the following section we briefly sketch the findings of \cite{Man2, Man3}, where an analogue for the one dimensional Schr\"{o}dinger operator with the $\delta'$-like potential was considered on the metric graph.
In Section 3 we prove our main theorem, which generalizes the results of \cite{GolHry} to the metric graph.

\section{Preliminaries and main results}
Let us recall the basic notions of the theory of differential equations on graphs.
By a metric graph $G=(V,E)$ we mean a finite set $V$ of points in $\mathbb{R}^3$ (\textit{vertices}) together with a set $E$ of smooth regular curves connecting the vertices (\textit{edges}).
A map $f:\,G\to\mathbb{R}$ is said to be a \textit{function on the graph}, and
the restriction of $f$ onto the edge $g\in E(G)$ will be denoted by $f_g$.
Each edge is equipped with a natural parametrization;
the differentiation is performed with respect to the natural parameter.
We denote by $\frac{\mathrm{d} f}{\mathrm{d} g}(a)$ the limit value of the derivative at the point $a\in V(G)$ taken in the direction away from the vertex.
The integral of $f$ over $G$ is the sum of integrals over all edges.
Let $\mathscr{L}_2(G)$ be a Hilbert space with the inner product $(f,h)=\int_Gf\bar{h}\,\mathrm{d} G$.
We also introduce the spaces
\begin{align*}
\mathscr{C}^\infty(G)&=\{f \mid
f_g\in C^\infty(\bar g)\;\hbox{for all}\; g\in E(G)\},
\\
\mathscr{AC}(G)&=\{f \mid f_g\in AC(g)\;
\hbox{for all}\; g\in E(G)\},
\\
\mathscr{W}_2^2(G)&=\{f\in \mathscr{L}_2(G) \mid f_g\in W_2^2(g)\;
\hbox{for all}\; g\in E(G)\}.
\end{align*}
We say that a function $f$ satisfies the Kirchhoff interface conditions at the vertex $a\in V(G)$ if $f$ is continuous at this vertex and $\sum\frac{\mathrm{d} f}{\mathrm{d} g}(a)=0$, where the sum is taken over all $g\in E(G)$ such that $a\in\bar g$;
in the latter case we shall write $f\in\mathcal{K}(G;a)$.

Let us consider a noncompact star graph $\Gamma$ consisting of three edges $\gamma_1$, $\gamma_2$ and $\gamma_3$.
All edges are connected at the origin of $\mathbb{R}^3$, denoted by $a$.
Then $E(\Gamma)=\{\gamma_1,\gamma_2,\gamma_3\}$ and suppose that all edges are half-lines.
We write $a_n^\varepsilon$ for the point of intersection of the $\varepsilon$-sphere, centered at $a$, with the edge $\gamma_n\in E(\Gamma)$;
denote by $\Gamma_\varepsilon$ a sub-partition of $\Gamma$ containing new vertices $a_{1}^\varepsilon$, $a_2^\varepsilon$ and $a_3^\varepsilon$.
Each $a_n^\varepsilon$ divides the edge $\gamma_n$ of the graph $\Gamma$ into two edges $\omega_n^\varepsilon$ (compact) and $\gamma_n^\varepsilon$ (noncompact) of the graph $\Gamma_\varepsilon$.
Let $\Omega_\varepsilon$ be a star subgraph of $\Gamma_\varepsilon$ such that
$V(\Omega_\varepsilon)=\{a,a_1^\varepsilon,a_2^\varepsilon,a_3^\varepsilon\}$ and
$E(\Omega_\varepsilon)=\{\omega_1^\varepsilon,\omega_2^\varepsilon,\omega_3^\varepsilon\}$.
Set $\Omega:=\Omega_1$, $\omega_n:=\omega_n^1$ and $a_n:=a_n^1$.

In \cite{Man2}, the family of Schr\"{o}dinger operators on the star graph
\begin{align*}
\mathrm{H}_\varepsilon(\alpha,Q)&=
-\frac{\mathrm{d}^2}{\mathrm{d} x^2}
+q(x)+\alpha\varepsilon^{-2}Q(\varepsilon^{-1}x),
\\
\mathrm{dom}\,\mathrm{H}_\varepsilon(\alpha,Q)&=
\big\{f\in \mathscr{L}_2(\Gamma)\mid
f,\,f'\in \mathscr{AC}(\Gamma),\quad -f''+qf\in \mathscr{L}_2(\Gamma),\quad f\in \mathcal{K}(\Gamma;a)\big\}
\end{align*}
was studied.
Here $Q\in\mathcal{Q}:=\big\{Q\in \mathscr{C}^\infty(\Gamma)\mid \mathrm{supp}\,Q=\Omega,\;\int_\Omega Q\,\mathrm{d}\Omega=0\big\}$
and the potential $\varepsilon^{-2}Q(\varepsilon^{-1}x)$ is referred to as the $\delta'$-\textit{like potential};
$q$ is a smooth real-valued function such that
$q_\gamma(x)\to+\infty$ as $|x|\to+\infty$ for all $\gamma\in E(\Gamma)$ (this ensures the spectrum discreteness for $\mathrm{H}_\varepsilon(\alpha, Q)$).
An operator $\mathrm{H}(\alpha,Q)$ was assigned to each pair $(\alpha,Q)\in\mathbb{R}\times\mathcal{Q}$.
The choice of the operator was suggested by the proximity of the eigenvalues and eigenfunctions for $\mathrm{H}_\varepsilon(\alpha,Q)$ and $\mathrm{H}(\alpha,Q)$ respectively.
The following problem  is introduced in \cite{Man2}
\begin{align}\label{Resonant_Set}
\begin{aligned}
-&f''+\alpha Qf=0\quad\hbox{on$\quad\Omega$,}
\qquad
f\in\mathcal{K}(\Omega;a),
\\
&\frac{\mathrm{d} f}{\mathrm{d}\omega_1}(a_1)=
\frac{\mathrm{d} f}{\mathrm{d}\omega_2}(a_2)=
\frac{\mathrm{d} f}{\mathrm{d}\omega_3}(a_3)
=0.
\end{aligned}
\end{align}
The choice of $\mathrm{H}(\alpha,Q)$ depends on whether the above problem has a nontrivial solution which can be regarded as an eigenfunction corresponding to the eigenvalue $\alpha$.
Therefore three different cases are distinguished.
In the simplest \textit{non-resonant case}, when $\alpha$ is not in the spectrum of problem \eqref{Resonant_Set}, $\mathrm{H}(\alpha,Q)$ is the direct sum of the Schr\"{o}dinger operators with the potential $q$ on edges, subject to the Dirichlet boundary conditions at the vertex $a$.
In the {\it(simple or double) resonant case} the \textit{coupling vector} $(\theta_1, \theta_2, \theta_3)$ is introduced.

Let $\alpha$ be a simple eigenvalue of the problem~(\ref{Resonant_Set}) (the simple resonant case) with an eigenfunction $\phi_\alpha$; then
introduce
\begin{equation*}
    \theta_1:=\phi_\alpha(a_1),\qquad
    \theta_2:=\phi_\alpha(a_2),\qquad
    \theta_3:=\phi_\alpha(a_3).
\end{equation*}
In the double resonant case, when $\alpha$ is a double eigenvalue,
the coupling vector is defined as the vector product of $(\varphi_\alpha(a_1),\varphi_\alpha(a_2),\varphi_\alpha(a_3))$ and $(\psi_\alpha(a_1),\psi_\alpha(a_2),\psi_\alpha(a_3))$, where $\varphi_\alpha$ and $\psi_\alpha$ form a basis in the corresponding eigenspace, i.e.,
\begin{eqnarray*}
\theta_1:=
\begin{vmatrix}
\varphi_\alpha(a_2)&\varphi_\alpha(a_3)\\
\psi_\alpha(a_2)&\psi_\alpha(a_3)
\end{vmatrix}
,\quad
\theta_2:=
\begin{vmatrix}
\varphi_\alpha(a_3)&\varphi_\alpha(a_1)\\
\psi_\alpha(a_3)&\psi_\alpha(a_1)
\end{vmatrix}
,\quad
\theta_3:=
\begin{vmatrix}
\varphi_\alpha(a_1)&\varphi_\alpha(a_2)\\
\psi_\alpha(a_1)&\psi_\alpha(a_2)
\end{vmatrix}.
\end{eqnarray*}
In the simple resonant case the operator $\mathrm{H}(\alpha,Q)$ acts via
$\mathrm{H}(\alpha,Q)f=-f''+qf$ on an appropriate set of functions obeying the interface conditions
\begin{equation*}
\frac{f_{\gamma_1}(a)}{\theta_1}=
\frac{f_{\gamma_2}(a)}{\theta_2}=
\frac{f_{\gamma_3}(a)}{\theta_3},\qquad
\sum\limits_{n=1}^3\theta_n\frac{\mathrm{d} f}{\mathrm{d}\gamma_n}(a)=0.
\end{equation*}
In the double resonant case
the interface conditions may be written as
\begin{equation*}
\frac{1}{\theta_1}\frac{\mathrm{d} f}{\mathrm{d}\gamma_1}(a)=
\frac{1}{\theta_2}\frac{\mathrm{d} f}{\mathrm{d}\gamma_2}(a)=
\frac{1}{\theta_3}\frac{\mathrm{d} f}{\mathrm{d}\gamma_3}(a),\qquad
\sum\limits_{n=1}^3\theta_nf_{\gamma_n}(a)=0.
\end{equation*}

If $\lambda$ is an eigenvalue of the operator $\mathrm{H}(\alpha,Q)$, we shall denote by $\mathrm{P}_\lambda$ the orthogonal projector onto the corresponding eigenspace.
Let $\mathrm{P}_\lambda(\varepsilon)$ stand for the orthogonal projector onto the finite dimensional space spanned by all eigenfunctions corresponding to those eigenvalues $\lambda_\varepsilon$ of $\mathrm{H}_\varepsilon(\alpha,Q)$ that $\lambda_\varepsilon\to\lambda$ as $\varepsilon\to0$.
Here $\lambda_\varepsilon=\lambda_{j,\varepsilon}$ continuously depends on $\varepsilon$.
The results of \cite{Man2} may be summarized in the following theorem.

\begin{thm}
All eigenvalues of $\mathrm{H}_\varepsilon(\alpha,Q)$ (except at most a finite number) are bounded as $\varepsilon\to0$.
Let $\lambda_\varepsilon$ be an eigenvalue of $\mathrm{H}_\varepsilon(\alpha,Q)$ bounded as $\varepsilon\to0$,
then $\lambda_\varepsilon$ has a finite limit $\lambda$ that is a point of the spectrum of $\mathrm{H}(\alpha,Q)$. Moreover, $\|\mathrm{P}_\lambda(\varepsilon)-\mathrm{P}_\lambda\|\to0$ as $\varepsilon\to0$.
Conversely, if $\lambda$ is an eigenvalue of $\mathrm{H}(\alpha,Q)$, then there exists an eigenvalue $\lambda_\varepsilon$ of $\mathrm{H}_\varepsilon(\alpha,Q)$ such that $\lambda_\varepsilon\to\lambda$ as $\varepsilon\to0$.
\end{thm}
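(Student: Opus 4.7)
The plan is to use a two-scale analysis that separates the behavior of eigenfunctions on the compact vertex region $\Omega_\varepsilon$ (where the singular potential is concentrated) from their behavior on the non-compact edges $\gamma_n^\varepsilon$. On $\Omega_\varepsilon$ the change of variable $y=\varepsilon^{-1}x$ turns $\mathrm{H}_\varepsilon(\alpha,Q)$ into $\varepsilon^{-2}\bigl(-\tfrac{\mathrm{d}^2}{\mathrm{d} y^2}+\alpha Q(y)\bigr)+q(\varepsilon y)$ acting on the fixed graph $\Omega$. For an eigenvalue $\lambda_\varepsilon$ bounded as $\varepsilon\to 0$ and an associated normalized eigenfunction $f_\varepsilon$, this equation forces the leading profile of $f_\varepsilon$ on $\Omega_\varepsilon$ (rescaled) to solve $-\tilde{f}''+\alpha Q\tilde{f}=0$ on $\Omega$; combined with the Kirchhoff condition at $a$ and the Neumann data inherited from matching with the exterior values at $a_n^\varepsilon$, this is precisely the auxiliary problem~\eqref{Resonant_Set}, which explains why the choice of limit operator bifurcates at the resonant set.

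First I would prove the direct direction: every bounded eigenvalue $\lambda_\varepsilon\to\lambda$ yields an eigenvalue of the limit operator. Using the quadratic form of $\mathrm{H}_\varepsilon(\alpha,Q)$ together with the confining assumption $q\to+\infty$, one obtains uniform bounds for $f_\varepsilon$ in $\mathscr{W}_2^2$ on compact subsets of $\Gamma$ separated from $a$, producing a weakly convergent subsequence with limit $f$ on the edges $\gamma_n$. On $\Omega_\varepsilon$ one sets $\tilde{f}_\varepsilon(y):=f_\varepsilon(\varepsilon y)$; standard elliptic estimates on the fixed graph $\Omega$, combined with the bound on $\|\mathrm{H}_\varepsilon(\alpha,Q)f_\varepsilon\|$, yield convergence of $\tilde{f}_\varepsilon$ (along a subsequence) to a solution $\tilde{f}_0$ of~\eqref{Resonant_Set}. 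Matching $f_\varepsilon$ on $\gamma_n^\varepsilon$ (close to $f_{\gamma_n}$ near $a$) with $\tilde{f}_\varepsilon$ on $\Omega$ at the three junction points, through Taylor expansion and use of the Kirchhoff condition of $\mathrm{H}_\varepsilon(\alpha,Q)$ at $a$, extracts precisely the interface conditions of $\mathrm{H}(\alpha,Q)$ in each of the non-resonant, simple, and double resonant cases.

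For the reverse implication, for each $\lambda\in\sigma(\mathrm{H}(\alpha,Q))$ with eigenfunction $\psi$ I would construct an approximate eigenfunction $\psi_\varepsilon$ for $\mathrm{H}_\varepsilon(\alpha,Q)$ by splicing $\psi$ on $\Gamma\setminus\Omega_\varepsilon$ to an inner profile on $\Omega_\varepsilon$. In the non-resonant case $\psi$ obeys Dirichlet data at $a$, and the inner profile solves the rescaled inhomogeneous problem with boundary data $\psi(a_n^\varepsilon)$; since $\alpha$ is not in the spectrum of~\eqref{Resonant_Set}, this solution exists uniquely and is $O(\varepsilon)$. In the simple resonant case one takes the inner profile to be $c\,\phi_\alpha(\varepsilon^{-1}x)$ (respectively a linear combination of $\varphi_\alpha$ and $\psi_\alpha$ in the double resonant case), with $c$ fixed by matching the values $\psi_{\gamma_n}(a)$, which by hypothesis are proportional to $\theta_n$. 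Computing $(\mathrm{H}_\varepsilon(\alpha,Q)-\lambda)\psi_\varepsilon$ produces $O(\varepsilon)$ defects, whence the self-adjoint spectral theorem places an eigenvalue of $\mathrm{H}_\varepsilon(\alpha,Q)$ within $O(\varepsilon)$ of $\lambda$. The projector convergence $\|\mathrm{P}_\lambda(\varepsilon)-\mathrm{P}_\lambda\|\to 0$ then follows from the spectral gap and a matching dimension count obtained by combining the two directions.

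The hard part is controlling the matching in the resonant cases, where~\eqref{Resonant_Set} has a nontrivial kernel. Leading-order matching then fixes only one interface relation (equality of the rescaled values); the remaining condition arises from the Fredholm solvability constraint on the next-order inhomogeneous problem on $\Omega$, which forces its right-hand side to be orthogonal to the kernel. This orthogonality is what generates $\sum_n\theta_n\tfrac{\mathrm{d} f}{\mathrm{d}\gamma_n}(a)=0$ in the simple case, and the corresponding vectorial identity with the determinantal $\theta_n$ in the double resonant case. Making this expansion rigorous uniformly in $\lambda$ bounded as $\varepsilon\to 0$, and excluding spurious eigenvalues that might escape beyond a fixed window (responsible for the ``at most a finite number'' clause), is the principal technical obstacle.
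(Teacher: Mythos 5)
The first thing to say is that this paper contains no proof of the statement: it is Theorem A, quoted explicitly as a summary of the results of \cite{Man2}, and the only proof machinery in the present paper (Section 3) is devoted to the different Theorem 1 (norm resolvent convergence, and only for $q\equiv 0$). So there is no in-paper proof to compare against; your proposal can only be judged on its own terms and against the two-scale construction the author does carry out for the resolvent. In that light, your overall strategy --- inner rescaling $y=\varepsilon^{-1}x$ on $\Omega_\varepsilon$, identification of the leading inner profile with a solution of \eqref{Resonant_Set}, quasi-mode splicing for the converse direction, and extraction of the second interface condition from the Fredholm solvability constraint --- is the right general idea and runs exactly parallel to the construction of $\tilde y_\varepsilon$ in Section 3 (outer part $y$, inner part $y(a_1^\varepsilon)\phi_\alpha(\varepsilon^{-1}\cdot)+\varepsilon u_\varepsilon(\varepsilon^{-1}\cdot)$, corrector $w_\varepsilon$ removing the jumps at $a_n^\varepsilon$, solvability fixing the Neumann data $\kappa_\varepsilon$, $\mu_\varepsilon$, $\nu_\varepsilon$).

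Nevertheless, two parts of the theorem are left genuinely unproved in your outline. First, the opening claim that all but finitely many eigenvalue branches remain bounded as $\varepsilon\to0$ is never argued; it does not follow from the matching analysis, nor would it follow from resolvent convergence (norm resolvent convergence does not exclude branches diving to $-\infty$). One needs a separate counting argument, e.g. that the unbounded branches behave like $\varepsilon^{-2}\mu_k+o(\varepsilon^{-2})$, where $\mu_1\le\dots\le\mu_m<0$ are the finitely many negative eigenvalues of $-\mathrm{d}^2/\mathrm{d}y^2+\alpha Q$ on the fixed graph $\Omega$ with the Kirchhoff and Neumann conditions of \eqref{Resonant_Set}. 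Second, the resonant-case matching --- which you yourself flag as ``the principal technical obstacle'' --- is precisely where the content of the theorem lies: the derivation of $\sum_n\theta_n\frac{\mathrm{d}f}{\mathrm{d}\gamma_n}(a)=0$ (and its double-resonant analogue) from the next-order solvability condition must be carried out uniformly for $\lambda$ in bounded sets, and the projector estimate $\|\mathrm{P}_\lambda(\varepsilon)-\mathrm{P}_\lambda\|\to0$ requires quantitative closeness of eigenfunctions, not merely a spectral gap plus a dimension count: a priori several $\varepsilon$-branches could converge to $\lambda$ with the wrong limiting eigenspace unless one shows that the spliced quasi-modes asymptotically span all of $\mathrm{ran}\,\mathrm{P}_\lambda(\varepsilon)$. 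As written, the proposal is a sound plan whose hard steps are announced rather than executed.
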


From this point forward we assume that $q\equiv0$, i.e., the operator $\mathrm{H}(\alpha,Q)$ involves no potential and $\mathrm{H}_\varepsilon(\alpha,Q)$ is
\begin{equation*}
\mathrm{H}_\varepsilon(\alpha,Q)
=-\frac{\mathrm{d}^2}{\mathrm{d} x^2}+\alpha\varepsilon^{-2}Q(\varepsilon^{-1}x),
\qquad
\mathrm{dom}\,
\mathrm{H}_\varepsilon(\alpha,Q)=
\mathscr{W}_2^2(\Gamma)\cap\mathcal{K}(\Gamma;a).
\end{equation*}
The scattering properties of the Hamiltonians $\mathrm{H}_\varepsilon(\alpha,Q)$ on the graph $\Gamma$ with the finite-range potentials $\alpha\varepsilon^{-2}Q(\varepsilon^{-1}x)$ in the limit $\varepsilon\to0$ were studied in \cite{Man3}.
It was proved that the scattering coefficients depend on $\alpha$ and $Q$.
In the generic (non-resonant) case the barrier $\alpha\varepsilon^{-2}Q(\varepsilon^{-1}x)$ is asymptotically opaque as $\varepsilon\to0$.
An exception to this occurs when problem \eqref{Resonant_Set} admits a nontrivial solution.
In \cite{Man3} it was also shown that
the scattering amplitude for the operators $\mathrm{H}_\varepsilon(\alpha,Q)$ and $\mathrm{H}_0$ converges as $\varepsilon\to0$ to that for the limiting operator $\mathrm{H}(\alpha,Q)$ and $\mathrm{H}_0$.
Here the operator $\mathrm{H}_0$ (the Hamiltonian  of a free particle on $\Gamma$ \cite{Kuchment02})  acts via $\mathrm{H}_0f=-f''$ on its domain $\mathscr{W}_2^2(\Gamma)\cap\mathcal{K}(\Gamma;a)$.
The following theorem summarizes the main advances in \cite{Man3}.

\begin{thm}
The scattering matrix for the operators $\mathrm{H}_\varepsilon(\alpha,Q)$
and $\mathrm{H}_0$ converges as $\varepsilon\to0$ to the scattering matrix for $\mathrm{H}(\alpha,Q)$ and $\mathrm{H}_0$.
\end{thm}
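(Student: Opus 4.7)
The plan is to construct stationary scattering solutions at each energy $k^2>0$ and read off $S_\varepsilon(k)$ from their amplitudes. Since the potential $\alpha\varepsilon^{-2}Q(\varepsilon^{-1}x)$ is supported inside the $\varepsilon$-sphere about $a$, any solution of $(\mathrm{H}_\varepsilon(\alpha,Q)-k^2)f=0$ takes the form $f_{\gamma_n^\varepsilon}(x)=A_n e^{ikx}+B_n e^{-ikx}$ on each noncompact edge, while on the inner star $\Omega_\varepsilon$ the rescaled function $g(y):=f(\varepsilon y)$ solves the fixed-domain problem $-g''+\alpha Q g=\varepsilon^2 k^2 g$ on $\Omega$ with Kirchhoff condition at $a$. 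The matrix $S_\varepsilon(k)$ is then determined by the linear map $(B_n)_n\mapsto(A_n)_n$ arising from matching $f$ and $f'$ at the points $a_n^\varepsilon$.

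Next I would set up a regular perturbation expansion $g=g_0+\varepsilon g_1+\varepsilon^2 g_2+\cdots$, in which $g_0$ and $g_1$ both satisfy $-g''+\alpha Q g=0$ on $\Omega$ and $g_2$ satisfies $-g_2''+\alpha Q g_2=k^2 g_0$, all carrying Kirchhoff conditions at $a$. Expanding $e^{\pm ik\varepsilon}$ in the matching identities at $a_n^\varepsilon$ gives, order by order,
\begin{equation*}
\frac{\mathrm{d} g_0}{\mathrm{d}\omega_n}(a_n)=0,\qquad A_n+B_n=g_0(a_n),\qquad \frac{\mathrm{d} g_1}{\mathrm{d}\omega_n}(a_n)=ik(A_n-B_n).
\end{equation*}
The first identity says precisely that $g_0$ solves the resonance problem~\eqref{Resonant_Set}.

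In the non-resonant case, \eqref{Resonant_Set} admits only $g_0\equiv0$, whence $A_n+B_n=O(\varepsilon)$ for each $n$, so $A_n/B_n\to-1$ and the limit scattering matrix is the Dirichlet decoupled one, which is exactly $S(k)$ for $\mathrm{H}(\alpha,Q)$ in this case. In the simple resonant case, $g_0=c\phi_\alpha$, so $A_n+B_n=c\theta_n$ is precisely the first interface condition for $\mathrm{H}(\alpha,Q)$. For the second condition I would apply Green's identity on $\Omega$ to the pair $(\phi_\alpha,g_1)$: the bulk integrals cancel since both functions satisfy $-u''+\alpha Q u=0$; the Kirchhoff conditions at $a$ eliminate the interior-vertex contribution; and the boundary terms at $a_1,a_2,a_3$ collapse to $\sum_n\theta_n\,\frac{\mathrm{d} g_1}{\mathrm{d}\omega_n}(a_n)=ik\sum_n\theta_n(A_n-B_n)=0$, i.e., the required second interface condition. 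The double resonant case is handled identically with a two-dimensional eigenspace; the range and Fredholm conditions combine, via the vector-product definition of $\theta_n$, into the interface conditions stated earlier for $\mathrm{H}(\alpha,Q)$.

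The main obstacle is to turn this formal matched expansion into a controlled one: in the resonant cases, the existence of $g_2$ already requires that the Fredholm orthogonality which produced the interface identity be respected at the next order, forcing a pseudoinverse construction on $\phi_\alpha^\perp$ for the higher $g_k$, and one also needs the resulting linear system for $(A_n,B_n)$ to be uniformly invertible in $\varepsilon$. Once the remainders are shown to be $o(1)$ locally uniformly in $k\in(0,\infty)$, Cramer's rule yields $S_\varepsilon(k)\to S(k)$ entrywise, which is the assertion of the theorem.
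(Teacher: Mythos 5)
First, a caveat: the paper itself contains no proof of this statement. It appears as Theorem B and is explicitly presented as a summary of the results of \cite{Man3} (``The following theorem summarizes the main advances in \cite{Man3}''), so there is no in-paper argument to compare yours against; I can only judge your proposal on its own terms and against the definitions of $\mathrm{H}(\alpha,Q)$ in Section 2. On that score your formal identification of the limit is correct: the rescaling $g(y)=f(\varepsilon y)$ turning the inner problem into $-g''+\alpha Qg=\varepsilon^2k^2g$ on the fixed star $\Omega$, the order-zero matching forcing $g_0$ to solve \eqref{Resonant_Set}, the conclusion $A_n+B_n\to0$ (Dirichlet decoupling) in the non-resonant case, the relation $A_n+B_n=c\,\theta_n$ in the simple resonant case, and the condition $\sum_n\theta_n(A_n-B_n)=0$ obtained from the Lagrange identity for $(\phi_\alpha,g_1)$ (equivalently, the Fredholm solvability condition for $g_1$) reproduce exactly the interface conditions defining $\mathrm{H}(\alpha,Q)$; the cross-product bookkeeping in the double resonant case likewise yields $\sum_n\theta_n(A_n+B_n)=0$ together with $(A_n-B_n)$ proportional to $(\theta_n)$, as required.

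The genuine gap is the one you concede in your final paragraph, and it is the entire analytic content of the theorem: everything before it is a formal matched expansion, and the assertion to be proved is precisely that the formal limit is the true limit. Two things are missing. First, error control: you never show that the exact amplitudes differ from the truncated expansion by $o(1)$, and the route you sketch (pseudoinverses on $\phi_\alpha^\perp$ for all higher $g_k$, plus remainder bounds for an infinite series) is heavier than necessary. The clean way is to avoid the expansion altogether: write the exact matching conditions at the points $a_n^\varepsilon$ in terms of a fundamental system of solutions of $-g''+\alpha Qg=\lambda g$ on $\Omega$, which depends analytically on $\lambda=\varepsilon^2k^2$; then $S_\varepsilon(k)$ is a ratio of determinants whose entries converge as $\varepsilon\to0$, and no asymptotic series is needed. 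Second, non-degeneracy: convergence of the coefficients of the linear system for $(A_n)$ in terms of $(B_n)$ yields convergence of its solution only if the limiting system is uniquely solvable for every $k>0$; this does hold (it follows from self-adjointness of $\mathrm{H}(\alpha,Q)$ and hence unitarity of $S(k)$), but it must be stated and used, since otherwise Cramer's rule could degenerate in the limit. Until these two points are supplied, what you have is a correct heuristic identification of the limiting scattering matrix rather than a proof of the convergence.
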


We observe that the above results on proximity for small $\varepsilon$ of the eigenvalues, eigen\-func\-tions, and scattering quantities for the limiting and regularized operators do not shed any light on convergence as $\varepsilon\to0$ of the operators $\mathrm{H}_\varepsilon(\alpha,Q)$ in whatever topology.
Such a convergence, however, would imply convergence of many other characteristics of interest.
Our objective in this paper is to study this question and the main result is contained in the following theorem.
\begin{theorem}
As  $\varepsilon\to0$, the operator family $\mathrm{H}_\varepsilon(\alpha,Q)$ converges  in the norm resolvent sense to $\mathrm{H}(\alpha,Q)$.
Moreover,
for a fixed $\zeta\in\mathbb{C}\setminus\mathbb{R}$ there exists a constant $C$ such that
\[
\|(\mathrm{H}_\varepsilon(\alpha,Q)-\zeta)^{-1}-
(\mathrm{H}(\alpha,Q)-\zeta)^{-1}\|_{B(\mathscr{L}_2(\Gamma))}
\leq  C\varepsilon^\frac12
\]
for every $\varepsilon\in(0,1)$.
\end{theorem}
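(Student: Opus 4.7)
The plan is to adapt to the three-edge star the quasi-resolvent construction carried out in \cite{GolHry} for the real line. Fix $\zeta\in\mathbb{C}\setminus\mathbb{R}$ and $f\in\mathscr{L}_2(\Gamma)$; set $u=(\mathrm{H}(\alpha,Q)-\zeta)^{-1}f$. The aim is to build a function $\tilde{u}_\varepsilon\in\mathrm{dom}\,\mathrm{H}_\varepsilon(\alpha,Q)$ such that both
$\|\tilde{u}_\varepsilon-u\|_{\mathscr{L}_2(\Gamma)}$ and the residual $\|(\mathrm{H}_\varepsilon(\alpha,Q)-\zeta)\tilde{u}_\varepsilon-f\|_{\mathscr{L}_2(\Gamma)}$ are bounded by $C\varepsilon^{1/2}\|f\|$. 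Since $\|(\mathrm{H}_\varepsilon(\alpha,Q)-\zeta)^{-1}\|\le|\mathrm{Im}\,\zeta|^{-1}$ uniformly in $\varepsilon$, combining these two bounds with the identity
\begin{equation*}
(\mathrm{H}_\varepsilon(\alpha,Q)-\zeta)^{-1}f-\tilde{u}_\varepsilon=(\mathrm{H}_\varepsilon(\alpha,Q)-\zeta)^{-1}\bigl(f-(\mathrm{H}_\varepsilon(\alpha,Q)-\zeta)\tilde{u}_\varepsilon\bigr)
\end{equation*}
gives the asserted $\varepsilon^{1/2}$-estimate.

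The construction of $\tilde{u}_\varepsilon$ separates outer and inner behaviour. On $\Gamma\setminus\Omega_\varepsilon$ I take $\tilde{u}_\varepsilon$ to coincide, up to a localized correction, with $u$. Inside $\Omega_\varepsilon$ the natural change of variable $x=\varepsilon y$ rewrites the equation $(\mathrm{H}_\varepsilon(\alpha,Q)-\zeta)w=f$ as
\begin{equation*}
-w''(y)+\alpha Q(y)w(y)=\varepsilon^{2}\bigl(\zeta w(y)+f(\varepsilon y)\bigr),\qquad y\in\Omega,\qquad w\in\mathcal{K}(\Omega;a),
\end{equation*}
whose leading part is the homogeneous problem underlying \eqref{Resonant_Set}. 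Accordingly I set $\tilde{u}_\varepsilon(x)=w_0(x/\varepsilon)+\varepsilon w_1(x/\varepsilon)$ on $\Omega_\varepsilon$, with $w_0$ and $w_1$ chosen so that the values and the first derivatives of $\tilde{u}_\varepsilon$ agree with those of $u$ at each gluing point $a_n^\varepsilon$ to the required order; a thin transition layer of width $O(\varepsilon)$ absorbs the residual mismatch and ensures $\tilde{u}_\varepsilon\in\mathscr{W}_2^2(\Gamma)\cap\mathcal{K}(\Gamma;a)$. Because $\mathscr{L}_2$-norms on $\Omega_\varepsilon$ pick up a factor $\varepsilon^{1/2}$ under the rescaling, the inner contributions come with precisely the stated rate.

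Compatibility of this matching with the side conditions defining $\mathrm{H}(\alpha,Q)$ is dictated by the Fredholm alternative for $-\partial_y^2+\alpha Q$ on $\Omega$ with Neumann data at $a_1,a_2,a_3$ and the Kirchhoff condition at $a$. In the non-resonant case this operator is invertible, so $w_0\equiv 0$ and matching at $a_n^\varepsilon$ forces $u_{\gamma_n}(a)=0$ for each $n$, which is exactly the Dirichlet condition built into $\mathrm{H}(\alpha,Q)$. In the simple resonant case the kernel is spanned by $\phi_\alpha$; the zero-order matching produces $w_0=c\,\phi_\alpha$, where the scalar $c$ must satisfy $c\,\theta_n=u_{\gamma_n}(a)$ for $n=1,2,3$, a system that is consistent precisely because $u$ obeys the first interface condition of $\mathrm{H}(\alpha,Q)$; solvability of the inhomogeneous equation for $w_1$ against $\phi_\alpha$ then requires $\sum_n\theta_n(du/d\gamma_n)(a)=0$, which is the second interface condition. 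The double resonant case is analogous, the kernel now being two-dimensional and the coupling vector arising as the cross product introduced in Section~2.

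The principal obstacle will be controlling the residual $(\mathrm{H}_\varepsilon(\alpha,Q)-\zeta)\tilde{u}_\varepsilon-f$ in $\mathscr{L}_2(\Gamma)$ at the sharp rate $\varepsilon^{1/2}$: one has to absorb the singular contributions produced by the $C^1$-mismatch of $\tilde{u}_\varepsilon$ at the gluing points $a_n^\varepsilon$ and to handle the boundary layer uniformly in $\varepsilon$. This will rest on Sobolev trace estimates on intervals of length $\varepsilon$ bounding $u(a_n^\varepsilon)$ and $u'(a_n^\varepsilon)$ in terms of $\|f\|$, on explicit solvability of the auxiliary problems for $w_0$ and $w_1$ (whose Fredholm data are supplied by the interface conditions of $\mathrm{H}(\alpha,Q)$), and on the spectral asymptotics summarized in the theorems of Section~2. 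Once these ingredients are assembled, collecting the $\varepsilon^{1/2}$-factors coming from the rescaling and from the boundary-layer cut-off yields the stated norm-resolvent estimate.
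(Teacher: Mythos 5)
Your overall strategy coincides with the paper's: build a quasi-resolvent $\tilde u_\varepsilon\in\mathrm{dom}\,\mathrm{H}_\varepsilon(\alpha,Q)$ by matching an outer copy of $u=(\mathrm{H}(\alpha,Q)-\zeta)^{-1}f$ with an inner two-term expansion on the rescaled star $\Omega$, use the Fredholm alternative for $-\partial_y^2+\alpha Q$ with Kirchhoff condition at $a$ and Neumann data at $a_1,a_2,a_3$ to fix the kernel component and the solvability of the corrector, and convert the residual bound into the resolvent bound via $\|(\mathrm{H}_\varepsilon(\alpha,Q)-\zeta)^{-1}\|\le|\Im\zeta|^{-1}$. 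Your $w_0+\varepsilon w_1$ is exactly the paper's $y(a_1^\varepsilon)\phi_\alpha(\varepsilon^{-1}\cdot)+\varepsilon u_\varepsilon(\varepsilon^{-1}\cdot)$ (with $w_0\equiv 0$ in the non-resonant case), and your identification of the interface conditions with the matching/solvability constraints is the same as the paper's computation of $\kappa_\varepsilon$, $\mu_\varepsilon$, $\nu_\varepsilon$.

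There is, however, one step that fails as you state it: the ``thin transition layer of width $O(\varepsilon)$'' that is supposed to absorb the $C^1$-mismatch at the gluing points $a_n^\varepsilon$. For $f\in\mathscr{L}_2(\Gamma)$ one only has $y\in\mathscr{W}_2^2(\Gamma)$, so the derivative mismatch $\bigl|\tfrac{\mathrm{d}y}{\mathrm{d}\gamma_n^\varepsilon}(a_n^\varepsilon)-\tfrac{\mathrm{d}u_\varepsilon}{\mathrm{d}\omega_n}(a_n)\bigr|$ is genuinely of order $\varepsilon^{1/2}\|f\|$ (it comes from $\int_a^{a_n^\varepsilon}|y''|\le\varepsilon^{1/2}\|y''\|_{\mathscr{L}_2}$ plus the $O(\varepsilon^{1/2})$ defect in the Fredholm orthogonality condition) and cannot be improved by higher-order matching. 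Removing a derivative jump of size $\varepsilon^{1/2}$ by a corrector supported on an interval of length $\varepsilon$ forces its second derivative to be of pointwise size $\varepsilon^{-1/2}$, hence of $\mathscr{L}_2$-norm $O(1)$ on that interval; the residual $(\mathrm{H}_\varepsilon(\alpha,Q)-\zeta)\tilde u_\varepsilon-f$ then fails to be $O(\varepsilon^{1/2})$ and in fact does not even tend to zero. The paper avoids this by taking the gluing functions $\eta_{0,n}^\varepsilon$, $\eta_{1,n}^\varepsilon$ supported on the intervals $[a_n^\varepsilon,a_n^{\varepsilon+1}]$ of length one, so that the corrector and its two derivatives are uniformly of size $O(\varepsilon^{1/2})\|f\|$ and contribute $O(\varepsilon^{1/2})\|f\|$ to the residual. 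Replace your $O(\varepsilon)$-layer by an $O(1)$-layer (on which the operator is just $-\mathrm{d}^2/\mathrm{d}x^2$, since $Q(\varepsilon^{-1}\cdot)$ is supported in $\Omega_\varepsilon$) and the argument closes.
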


Throughout the paper, $C_n$ and $c_n$ will denote different constants independent of $f$ and $\varepsilon$, and $\|\cdot\|$ will denote the $\mathscr{L}_2(\Gamma)$-norm.

\section{Proof of Theorem 1}
The underlying idea is to construct a function $\tilde y_\varepsilon$ that is a good approximation to the functions $y_\varepsilon:=(\mathrm{H}_\varepsilon(\alpha,Q)-\zeta)^{-1}f$ and $y:=(\mathrm{H}(\alpha,Q)-\zeta)^{-1}f$, uniformly for $f$ in bounded subsets of $\mathscr{L}_2(\Gamma)$.
Here $\zeta\in\mathbb{C}\setminus\mathbb{R}$ is a fixed number.
More precisely, to establish Theorem 1, we shall rely on the fact that for every $f\in\mathscr{L}_2(\Gamma)$ and $\varepsilon>0$ there exists a function $\tilde y_\varepsilon$ with the property that
\begin{equation}\label{Ineq_Approx}
\|\tilde{y}_\varepsilon-y_\varepsilon\|\leq C_1\varepsilon^\frac12\|f\|,\qquad
\|\tilde{y}_\varepsilon-y\|\leq C_2\varepsilon^\frac12\|f\|.
\end{equation}
From this  we find that
\[
\|(\mathrm{H}_\varepsilon(\alpha,Q)-\zeta)^{-1}f-
(\mathrm{H}(\alpha,Q)-\zeta)^{-1}f\|
\leq \|\tilde{y}_\varepsilon-y\|+\|y_\varepsilon-\tilde{y}_\varepsilon\|\leq C\varepsilon^\frac12\|f\|.
\]
Our next aim is to construct such an approximation.
The proof proceeds differently in the three cases: non-resonant, simple and double resonant.

\subsection{Proof of Theorem 1 in the non-resonant case}
In this subsection we shall prove Theorem 1 in the non-resonant case, when $\alpha$ does not belong to the resonant set.
We shall construct an approximation $\tilde y_\varepsilon$.

Denote by $u_\varepsilon$ a solution of the problem
\begin{equation*}
\begin{cases}
-u''+\alpha Qu=\varepsilon f(\varepsilon\,\cdot)\quad\mathrm{on}\quad\Omega,\qquad
u\in \mathcal{K}(\Omega;a),\\

\phantom{-}\frac{\mathrm{d} u}{\mathrm{d}\omega_n}(a_n)=\frac{\mathrm{d} y}{\mathrm{d}\gamma_n^\varepsilon}(a_n^\varepsilon),\quad
n=1,2,3.
\end{cases}
\end{equation*}
It is clear that $u_\varepsilon$ belongs to the Sobolev space $\mathscr{W}^2_2(\Omega)$ for every positive $\varepsilon$.

\begin{lemma}\label{Lemma_Non_Resonant}
Suppose that $f\in \mathscr{L}_2(\Gamma)$ and $\varepsilon\in(0,1)$; then
\[
\|u_\varepsilon\|_{\mathscr{W}_2^2(\Omega)}\leq C\|f\|.
\]
\end{lemma}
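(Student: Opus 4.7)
The plan is to bound the $\mathscr{W}_2^2(\Omega)$-norm of $u_\varepsilon$ by estimating separately the $\mathscr{L}_2(\Omega)$-norm of the right-hand side $\varepsilon f(\varepsilon\cdot)$ and the Neumann data $b_n:=\frac{\mathrm{d}y}{\mathrm{d}\gamma_n^\varepsilon}(a_n^\varepsilon)$, and then exploiting the well-posedness of the boundary value problem that defines $u_\varepsilon$, which is guaranteed by the non-resonance hypothesis.

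First, a change of variables $x\mapsto\varepsilon^{-1}x$ on each edge of $\Omega$ gives $\|\varepsilon f(\varepsilon\cdot)\|^2_{\mathscr{L}_2(\Omega)}=\varepsilon\|f\|^2_{\mathscr{L}_2(\Omega_\varepsilon)}\le\varepsilon\|f\|^2$, so the source term is of size $O(\sqrt{\varepsilon}\,\|f\|)$. Second, since $y=(\mathrm{H}(\alpha,Q)-\zeta)^{-1}f$ lies in $\mathrm{dom}\,\mathrm{H}(\alpha,Q)$, the resolvent bound together with the equivalence of the graph norm of $\mathrm{H}(\alpha,Q)$ and the $\mathscr{W}_2^2(\Gamma)$-norm on its domain yields $\|y\|_{\mathscr{W}_2^2(\Gamma)}\le C\|f\|$; the Sobolev embedding $W^2_2\hookrightarrow C^1$ on each edge then produces $|b_n|\le C\|f\|$ for $n=1,2,3$.

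Next, I would introduce the self-adjoint operator $L_\alpha u:=-u''+\alpha Qu$ on $\mathscr{L}_2(\Omega)$ with domain $\{u\in\mathscr{W}_2^2(\Omega)\cap\mathcal{K}(\Omega;a):\frac{\mathrm{d}u}{\mathrm{d}\omega_n}(a_n)=0,\;n=1,2,3\}$. The non-resonance hypothesis is precisely the statement that problem \eqref{Resonant_Set} has only the trivial solution, i.e.\ $\ker L_\alpha=\{0\}$, so self-adjointness yields a bounded inverse $L_\alpha^{-1}\colon\mathscr{L}_2(\Omega)\to\mathscr{W}_2^2(\Omega)$. To accommodate the inhomogeneous Neumann data I would construct an explicit lifting $v\in\mathscr{W}_2^2(\Omega)\cap\mathcal{K}(\Omega;a)$, supported away from $a$ (so Kirchhoff at $a$ is automatic), with $\frac{\mathrm{d}v}{\mathrm{d}\omega_n}(a_n)=b_n$ and $\|v\|_{\mathscr{W}_2^2(\Omega)}\le C\sum_n|b_n|$; taking $v_{\omega_n}$ to be a smooth cutoff times an affine function near $a_n$ suffices. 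Then $u_\varepsilon-v=L_\alpha^{-1}\bigl(\varepsilon f(\varepsilon\cdot)+v''-\alpha Qv\bigr)$, and combining the estimates of the preceding paragraph with the boundedness of $L_\alpha^{-1}$ delivers $\|u_\varepsilon\|_{\mathscr{W}_2^2(\Omega)}\le C\|f\|$.

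The main obstacle is the bounded invertibility of $L_\alpha$ in the non-resonant case; the remaining ingredients---the scaling identity for the source, the Sobolev control of $y'$ at $a_n^\varepsilon$, and the lifting construction---are routine. A technical subtlety to handle carefully is the orientation convention for $\frac{\mathrm{d}u}{\mathrm{d}\omega_n}$ versus $\frac{\mathrm{d}y}{\mathrm{d}\gamma_n^\varepsilon}$ when prescribing $v$, but this only affects signs and not the norm bounds.
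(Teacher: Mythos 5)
Your proposal follows essentially the same route as the paper: the resolvent bound plus Sobolev embedding to control the Neumann data $\frac{\mathrm{d}y}{\mathrm{d}\gamma_n^\varepsilon}(a_n^\varepsilon)$, the scaling identity \eqref{0Norm_f_Omega} for the source term, and the a priori estimate for $u_\varepsilon$ obtained by lifting the inhomogeneous boundary data and inverting the (non-resonant, hence boundedly invertible) operator $-\frac{\mathrm{d}^2}{\mathrm{d}x^2}+\alpha Q$ with Kirchhoff/Neumann conditions. The only difference is that you spell out in detail the lifting-and-inversion step that the paper dispatches with a ``roughly speaking'' remark; the argument is correct.
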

\begin{proof}
Observe that $(\mathrm{H}(\alpha,Q)-\zeta)^{-1}$ is a bounded operator from $\mathscr{L}_2(\Gamma)$ to the domain of $\mathrm{H}(\alpha,Q)$ equipped with the graph norm.
The latter space is a subspace of $\mathscr{W}_2^2(\Gamma)$, hence,
\[
\|y\|_{\mathscr{W}_2^2(\Gamma)}\leq c_1\|f\|.
\]
Since by the Sobolev embedding theorem $\mathscr{W}_2^2(\Gamma)\subset\mathscr{C}^1(\Gamma)$, we have
\begin{equation}\label{Sobolev}
\|y\|_{\mathscr{C}^1(\Gamma)}\leq c_2\|f\|.
\end{equation}
The sulution $u_\varepsilon$ obeys the a priori estimate
\[
\|u_\varepsilon\|_{\mathscr{W}_2^2(\Omega)}\leq
c_3
\Big(\sum\limits_{n=1}^3
\Big|\frac{\mathrm{d} y}{\mathrm{d}\gamma_n^\varepsilon}(a_n^\varepsilon)\Big|+
\varepsilon\|f(\varepsilon\,\cdot)\|_{\mathscr{L}_2(\Omega)}
\Big);
\]
this estimate can be obtained, roughly speaking, by removing the nonhomogeneity from the boundary conditions of the problem for $u_\varepsilon$ to the right hand side of the equation and using the fact that the resolvent of the corresponding differential operator
is a bounded operator from $\mathscr{L}_2(\Omega)$ to the domain of the operator equipped with the graph norm.
Applying \eqref{Sobolev}, the a priori estimate and taking into account that
\begin{equation}\label{0Norm_f_Omega}
\|f(\varepsilon\,\cdot)\|_{\mathscr{L}_2(\Omega)}\leq \varepsilon^{-\frac12}\|f\|,
\end{equation}
we, therefore, easily derive the inequality
$\|u_\varepsilon\|_{\mathscr{W}_2^2(\Omega)}
\leq C\|f\|$, which completes the proof.
\end{proof}

We introduce the function $v_\varepsilon:=(1-\chi_\varepsilon)y+\varepsilon\chi_\varepsilon u_\varepsilon$, where $\chi_\varepsilon$ is the characteristic function of $\Omega_\varepsilon$, i.e., $v_\varepsilon=y$ on $\Gamma\setminus\Omega_\varepsilon$ and $v_\varepsilon=\varepsilon\, u_\varepsilon(\varepsilon^{-1}\cdot)$ on $\Omega_\varepsilon$.
The function $v_\varepsilon$ is almost the desired approximation; the only problem is that it  is discontinuous at the points $a_n^\varepsilon$.
However the jumps of $v_\varepsilon$ at these points are small.
Indeed,
\[
[v_\varepsilon]_{a_n^\varepsilon}=
y(a_n^\varepsilon)-\varepsilon\, u_\varepsilon(a_n),\qquad
[v'_\varepsilon]_{a_n^\varepsilon}=0,
\]
where $[h]_{a_n^\varepsilon}=h_{\gamma_n^\varepsilon}(a_n^\varepsilon)-h_{\omega_n^\varepsilon}(a_n^\varepsilon)$ and
$[h']_{a_n^\varepsilon}=
\frac{\mathrm{d} h}{\mathrm{d}\gamma_n^\varepsilon}(a_n^\varepsilon)+\frac{\mathrm{d} h}{\mathrm{d}\omega_n^\varepsilon}(a_n^\varepsilon)$
are jumps of a function
$h$ and its first derivative at $x=a_n^\varepsilon$.
In view of Lemma~1
and the relations
\begin{equation*}
|y(a_n^\varepsilon)|
\leq \int_a^{a_n^\varepsilon}|y'|\,\mathrm{d}\gamma_n\leq c_1 \varepsilon^\frac12\|y\|_
{\mathscr{W}_2^2(\Gamma)}\leq
c_2\varepsilon^\frac12\|f\|
\end{equation*}
we get
\begin{align}
\label{0W_Jamps}
|[v_\varepsilon]_{a_n^\varepsilon}|\leq& |y(a_n^\varepsilon)|+
\varepsilon\,\|u_\varepsilon\|_{\mathscr{W}_2^2(\Omega)}\leq c_3\varepsilon^\frac12\|f\|.
\end{align}

Denote by $\eta_{0,n}^\varepsilon$ functions on the graph $\Gamma$ that are smooth outside the point $x=a_n^\varepsilon$, have
supports $[a_n^\varepsilon,a_n^{\varepsilon+1}]$, and
$\eta_{0,n}^\varepsilon=1$ on $[a_n^\varepsilon,a_n^{\varepsilon+\frac12}]$ for $n=1,2,3$.
The function $\eta_{0,1}^\varepsilon$ can be constructed in the following way.
On the edge $\gamma_1$ we consider a smooth function $\eta_{0,1}$ with the properties that $\mathrm{supp}\,\eta_{0,1}=[a,a_1^1]$ and $\eta_{0,1}=1$ on $[a,a_1^{\frac12}]$; then $\eta_{0,1}^\varepsilon$ is just a translation of $\eta_{0,1}$  extended by zero to the whole graph $\Gamma$.
The functions $\eta_{0,2}^\varepsilon$ and $\eta_{0,3}^\varepsilon$ can be constructed in a similar manner.

We set
\[
w_\varepsilon(x):=\sum_{n=1}^3[v_\varepsilon]_{a_n^\varepsilon}\eta_{0,n}^\varepsilon(x).
\]
Taking into account \eqref{0W_Jamps}, we obtain
\begin{equation}\label{0Ineq_w_eps}
\max\limits_{x\in\Omega_{\varepsilon+1}\setminus\Omega_\varepsilon}
|w_\varepsilon^{(j)}(x)|\leq
c_4\varepsilon^\frac12\|f\|
\end{equation}
for $j=0,1,2$.
By construction,
\begin{equation*}
\tilde{y}_\varepsilon:=v_\varepsilon-w_\varepsilon=
\begin{cases}
y-w_\varepsilon&\text{on}\quad \Gamma\setminus\Omega_\varepsilon,\\
\varepsilon\, u_\varepsilon(\varepsilon^{-1}\cdot)&\text{on}\quad \Omega_\varepsilon
\end{cases}
\end{equation*}
is a $\mathscr{C}^1(\Gamma)$-function and belongs to the domain of $\mathrm{H}_\varepsilon(\alpha,Q)$.
Now we show that $\tilde{y}_\varepsilon$ is a desired approximation
for $y_\varepsilon=(\mathrm{H}_\varepsilon(\alpha,Q)-\zeta)^{-1}f$ and $y=(\mathrm{H}(\alpha,Q)-\zeta)^{-1}f$.

\begin{proof}[Proof of Theorem 1 in the non-resonant case.]
Rewrite $\tilde{y}_\varepsilon$ in the form
\[
\textstyle
\tilde{y}_\varepsilon(x)=(1-\chi_\varepsilon(x))y(x)+
\varepsilon u_\varepsilon(\varepsilon^{-1}x)
-w_\varepsilon(x),
\]
where $\chi_\varepsilon$ is the characteristic function of $\Omega_\varepsilon$, $u_\varepsilon$ and $w_\varepsilon$ are extended by zero to the whole graph $\Gamma$.
Recalling the definition of $y$, $u_\varepsilon$, and $w_\varepsilon$, we find that
\[
\textstyle
(\mathrm{H}_\varepsilon(\alpha,Q)-\zeta)\tilde{y}_\varepsilon(x)=
(-\frac{\mathrm{d}^2}{\mathrm{d} x^2}-\zeta)(y(x)-w_\varepsilon(x))=f(x)+w''_\varepsilon(x)
+\zeta w_\varepsilon(x)
\]
for $x\in\Gamma\setminus\Omega_\varepsilon$, and that
\begin{align*}
(\mathrm{H}_\varepsilon(\alpha,Q)-\zeta)\tilde{y}_\varepsilon(x)&=
\varepsilon\big\{-\frac{\mathrm{d}^2}{\mathrm{d} x^2}+\alpha\varepsilon^{-2}Q(\varepsilon^{-1}x)-\zeta\big\}
 u_\varepsilon(\varepsilon^{-1}x)\\
&=
\varepsilon^{-1}\{-u''_\varepsilon+\alpha Qu_\varepsilon\}-\zeta\tilde{y}_\varepsilon(x)
=f(x)-\varepsilon\,\zeta u_\varepsilon(\varepsilon^{-1}x)
\end{align*}
for $x\in\Omega_\varepsilon$.
Therefore
$(\mathrm{H}_\varepsilon(\alpha,Q)-\zeta)\tilde{y}_\varepsilon(x)=f+r_\varepsilon$, where
\[
r_\varepsilon(x)=
\begin{cases}
w''_\varepsilon(x)+\zeta w_\varepsilon(x)&\text{if}\quad x\in\Gamma\setminus\Omega_\varepsilon,\\
-\varepsilon\,\zeta u_\varepsilon(\frac{x}{\varepsilon})&\text{if}\quad x\in\Omega_\varepsilon.
\end{cases}
\]
From this we conclude that
$\tilde{y}_\varepsilon-y_\varepsilon=(\mathrm{H}_\varepsilon(\alpha,Q)-\zeta)^{-1}r_\varepsilon$, and thus
\[
\|\tilde{y}_\varepsilon-y_\varepsilon\|\leq
\|(\mathrm{H}_\varepsilon(\alpha,Q)-\zeta)^{-1}\|\,\|r_\varepsilon\|\leq
|\Im\zeta|^{-1}\,\|r_\varepsilon\|.
\]
We can now use Lemma~\ref{Lemma_Non_Resonant} and estimate \eqref{0Ineq_w_eps} to arrive at the relation
\begin{align*}
\|r_\varepsilon\|&\leq c_1\|w''_\varepsilon+\zeta w_\varepsilon\|_{\mathscr{L}_2(\Omega_{\varepsilon+1}\setminus\Omega_\varepsilon)}+
c_2\|u_\varepsilon(\varepsilon^{-1}\cdot)\|_{\mathscr{L}_2(\Omega_\varepsilon)}\\
&\leq
c_3\max\limits_{x\in\Omega_{\varepsilon+1}\setminus\Omega_\varepsilon}
(|w_\varepsilon|+|w''_\varepsilon|)+
c_4\varepsilon^\frac12\|u_\varepsilon\|_{\mathscr{L}_2(\Omega)}
\leq c_5\varepsilon^\frac12\|f\|.
\end{align*}
This proves the first inequality in \eqref{Ineq_Approx}.
Similarly,
\begin{align*}
\|\tilde{y}_\varepsilon-y\|=&\,
\|\varepsilon u_\varepsilon(\varepsilon^{-1}\cdot)
-w_\varepsilon-\chi_\varepsilon y\|
\leq
c_6\varepsilon^\frac32\|u_\varepsilon\|_{\mathscr{L}_2(\Omega)}
\\&
+c_7\max\limits_{x\in\Omega_{\varepsilon+1}\setminus\Omega_\varepsilon}|w_\varepsilon|
+c_{8}\|y\|_{\mathscr{C}(\Gamma)}\|\chi_\varepsilon\|\leq c_{9}\varepsilon^\frac12\|f\|
\end{align*}
as required.
\end{proof}

\subsection{Proof of Theorem 1 in the simple resonant case}
Our aim in this subsection is to prove Theorem 1 in the simple resonant case, when $\alpha$ is a simple eigenvalue of problem \eqref{Resonant_Set}.

Let $\phi_\alpha$ be an eigenfunction of problem \eqref{Resonant_Set} corresponding to the eigenvalue $\alpha$.
Since all $\phi_\alpha(a_n)$ cannot be zero, we assume without loss of generality that $\phi_\alpha(a_1)=1$.
Denote by $u_\varepsilon$ a solution of the problem
\begin{equation}\label{1Eq v eps}
\begin{cases}
-u''+\alpha Qu=\varepsilon f(\varepsilon\,\cdot)\quad\mathrm{on}\quad\Omega,\qquad
u\in \mathcal{K}(\Omega;a),\\
\phantom{-}\frac{\mathrm{d} u}{\mathrm{d}\omega_1}(a_1)=\kappa_\varepsilon
,\quad
\frac{\mathrm{d} u}{\mathrm{d}\omega_n}(a_n)=
\frac{\mathrm{d} y}{\mathrm{d}\gamma_n^\varepsilon}(a_n^\varepsilon)
,\quad
n=2,3,
\end{cases}
\end{equation}
obeying the condition $u_\varepsilon(a_1)=0$.
Such a solution exists and is unique if
\begin{equation}\label{1kappa_eps}
\kappa_\varepsilon=
-
\Big(
\theta_2\frac{\mathrm{d} y}{\mathrm{d}\gamma_2^\varepsilon}(a_2^\varepsilon)+
\theta_3\frac{\mathrm{d} y}{\mathrm{d}\gamma_3^\varepsilon}(a_3^\varepsilon)+
\varepsilon\int_\Omega \phi_\alpha(t)f(\varepsilon t)\,\mathrm{d}\Omega
\Big);
\end{equation}
this follows from the Fredholm alternative.
In order to obtain $\kappa_\varepsilon$, we multiplied Eq. \eqref{1Eq v eps}
by the eigenfunction $\phi_\alpha$ and integrated by parts, bearing in mind that $\theta_1:=\phi_\alpha(a_1)=1$.

\begin{lemma}\label{Lemma_Simple_Resonant}
For any $f\in \mathscr{L}_2(\Gamma)$ and $\varepsilon\in(0,1)$ the following inequalities hold:
\[
\Big|\kappa_\varepsilon-\frac{\mathrm{d} y}{\mathrm{d}\gamma_1}(a)\Big|
\leq
C_1\varepsilon^\frac12\|f\|,
\qquad
\|u_\varepsilon\|_{\mathscr{W}_2^2(\Omega)}\leq C_2\|f\|.
\]
\end{lemma}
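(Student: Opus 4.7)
The plan is to reduce both inequalities to standard calculus-on-an-edge estimates together with the Fredholm compatibility identity already used to define $\kappa_\varepsilon$, and then exploit the interface condition that $y$ must satisfy in the simple resonant case.

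For the first inequality I would exploit the fact that $y=(\mathrm{H}(\alpha,Q)-\zeta)^{-1}f$ lies in the domain of $\mathrm{H}(\alpha,Q)$, so in the simple resonant case it obeys
\[
\sum_{n=1}^{3}\theta_n\frac{\mathrm{d} y}{\mathrm{d}\gamma_n}(a)=0,
\]
and since $\theta_1=1$ this rewrites $\frac{\mathrm{d} y}{\mathrm{d}\gamma_1}(a)=-\theta_2\frac{\mathrm{d} y}{\mathrm{d}\gamma_2}(a)-\theta_3\frac{\mathrm{d} y}{\mathrm{d}\gamma_3}(a)$. Subtracting this from \eqref{1kappa_eps} gives
\[
\kappa_\varepsilon-\frac{\mathrm{d} y}{\mathrm{d}\gamma_1}(a)=-\sum_{n=2}^{3}\theta_n\Bigl[\frac{\mathrm{d} y}{\mathrm{d}\gamma_n^\varepsilon}(a_n^\varepsilon)-\frac{\mathrm{d} y}{\mathrm{d}\gamma_n}(a)\Bigr]-\varepsilon\int_\Omega \phi_\alpha(t)f(\varepsilon t)\,\mathrm{d}\Omega.
\]
Each bracket is an increment of $y'$ along the segment of $\gamma_n$ of length $\varepsilon$, so by the fundamental theorem of calculus and Cauchy--Schwarz it is bounded by $\varepsilon^{1/2}\|y''\|_{\mathscr{L}_2(\Gamma)}\leq c\,\varepsilon^{1/2}\|f\|$, using the bound $\|y\|_{\mathscr{W}_2^2(\Gamma)}\leq c\|f\|$ that is obtained exactly as in the proof of Lemma~\ref{Lemma_Non_Resonant}. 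The integral term is controlled by Cauchy--Schwarz and the scaling estimate \eqref{0Norm_f_Omega}: $\varepsilon\|\phi_\alpha\|_{\mathscr{L}_2(\Omega)}\|f(\varepsilon\cdot)\|_{\mathscr{L}_2(\Omega)}\leq c\,\varepsilon^{1/2}\|f\|$. Summing yields the first inequality.

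For the second inequality I would apply the a priori estimate for problem \eqref{1Eq v eps}. The operator $u\mapsto -u''+\alpha Qu$ with Kirchhoff at $a$ and Neumann at the $a_n$'s has one-dimensional kernel spanned by $\phi_\alpha$; the orthogonality condition \eqref{1kappa_eps} makes the data admissible (Fredholm alternative), and the normalization $u_\varepsilon(a_1)=0$ removes the residual kernel, so the resulting linear operator from data to solution is bounded into $\mathscr{W}_2^2(\Omega)$. Following the scheme of Lemma~\ref{Lemma_Non_Resonant} (lifting the boundary data to the right-hand side) yields
\[
\|u_\varepsilon\|_{\mathscr{W}_2^2(\Omega)}\leq c\Bigl(|\kappa_\varepsilon|+\sum_{n=2}^{3}\Bigl|\frac{\mathrm{d} y}{\mathrm{d}\gamma_n^\varepsilon}(a_n^\varepsilon)\Bigr|+\varepsilon\|f(\varepsilon\cdot)\|_{\mathscr{L}_2(\Omega)}\Bigr).
\]
Then the first inequality combined with the Sobolev embedding \eqref{Sobolev} gives $|\kappa_\varepsilon|\leq|\tfrac{\mathrm{d} y}{\mathrm{d}\gamma_1}(a)|+C_1\varepsilon^{1/2}\|f\|\leq c\|f\|$, the middle terms are $\leq\|y\|_{\mathscr{C}^1(\Gamma)}\leq c\|f\|$, and the last is $\leq\varepsilon^{1/2}\|f\|$ by \eqref{0Norm_f_Omega}, finishing the bound.

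The main technical point to get right is the a priori estimate in the second step, because the relevant differential operator is degenerate at the resonance $\alpha$. Everything hinges on confirming that the point normalization $u_\varepsilon(a_1)=0$ is not accidentally tangent to the kernel $\mathrm{span}\{\phi_\alpha\}$, which is ensured here by the normalization $\phi_\alpha(a_1)=\theta_1=1\neq 0$; once this is observed, the Fredholm theory gives a bounded right inverse and the remainder is routine. Beyond that, the proof is a straightforward combination of the compatibility identity \eqref{1kappa_eps}, the $\mathscr{W}_2^2$--bound on $y$, Sobolev embedding, and the simple trace inequality $|y'(a_n^\varepsilon)-y'(a)|\leq\varepsilon^{1/2}\|y''\|_{\mathscr{L}_2(\Gamma)}$.
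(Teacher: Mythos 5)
Your proof of the first inequality coincides with the paper's: both subtract the identity $\frac{\mathrm{d} y}{\mathrm{d}\gamma_1}(a)=-\theta_2\frac{\mathrm{d} y}{\mathrm{d}\gamma_2}(a)-\theta_3\frac{\mathrm{d} y}{\mathrm{d}\gamma_3}(a)$ from \eqref{1kappa_eps}, bound the increments of $y'$ over the $\varepsilon$-segments by $\varepsilon^{1/2}\|y\|_{\mathscr{W}_2^2(\Gamma)}$ as in \eqref{1Jump_1_Der}, and control the source term via \eqref{0Norm_f_Omega}. For the second inequality you take a genuinely different route. The paper does not invoke a global a priori estimate for the degenerate problem on $\Omega$; it reconstructs $u_\varepsilon$ edge by edge: on $\omega_1$ it is the solution of a Cauchy problem with data $u(a_1)=0$, $u'(a_1)=\kappa_\varepsilon$; then the paper observes that $\alpha$ cannot lie simultaneously in the spectra of the Dirichlet--Neumann problems on $\omega_2$ and $\omega_3$ (otherwise one could assemble an eigenfunction of \eqref{Resonant_Set} vanishing on $\omega_1$, contradicting $\phi_\alpha(a_1)=1$), so that on, say, $\omega_2$ the restriction solves a uniquely solvable two-point boundary value problem with its own a priori bound, and finally $u_{\varepsilon,\omega_3}$ is recovered from a Cauchy problem at $a$ determined by the Kirchhoff conditions. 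Your argument instead treats the whole star $\Omega$ at once: the Fredholm alternative gives solvability on the compatible data, and the normalization $u(a_1)=0$ is transversal to the one-dimensional kernel precisely because $\phi_\alpha(a_1)=1\neq0$, so the normalized solution operator is bounded. This is correct and shorter, and it isolates the real reason the estimate holds; to make it fully rigorous you should spell out why boundedness follows, e.g.\ take the particular solution $u_0$ orthogonal to $\phi_\alpha$ (given by the reduced resolvent, hence controlled in $\mathscr{W}_2^2(\Omega)$ by the data) and correct by $-u_0(a_1)\phi_\alpha$, whose coefficient is controlled through the embedding $\mathscr{W}_2^2(\Omega)\subset\mathscr{C}(\Omega)$. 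The paper's edge-by-edge construction buys explicitness---each step is a classical one-dimensional estimate---at the price of the extra spectral non-degeneracy observation; your version generalizes more readily but leans on unstated Fredholm machinery.
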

\begin{proof}
As in the proof of Lemma~\ref{Lemma_Non_Resonant} we find that
\[
\|y\|_{\mathscr{W}_2^2(\Gamma)}\leq c_1\|f\|,\qquad
\|y\|_{\mathscr{C}^1(\Gamma)}\leq c_2\|f\|.
\]
Subtracting the relation $\frac{\mathrm{d} y}{\mathrm{d}\gamma_1}(a)=
-\theta_2\frac{\mathrm{d} y}{\mathrm{d}\gamma_2}(a)-
\theta_3\frac{\mathrm{d} y}{\mathrm{d}\gamma_3}(a)$
from \eqref{1kappa_eps} gives
\begin{align*}
\Big|\kappa_\varepsilon-\frac{\mathrm{d} y}{\mathrm{d}\gamma_1}(a)\Big|\leq&\,
|\theta_2|
\Big|\frac{\mathrm{d} y}{\mathrm{d}\gamma_2^\varepsilon}(a_2^\varepsilon)-\frac{\mathrm{d} y}{\mathrm{d}\gamma_2}(a)\Big|+
|\theta_3|
\Big|\frac{\mathrm{d} y}{\mathrm{d}\gamma_3^\varepsilon}(a_3^\varepsilon)-\frac{\mathrm{d} y}{\mathrm{d}\gamma_3}(a)\Big|\\
&+\varepsilon\,\|f(\varepsilon\,\cdot)\|_{L_2(\Omega)}\|\phi_\alpha\|_{L_2(\Omega)}\leq
C_1\varepsilon^\frac12\|f\|.
\end{align*}
Here we used \eqref{0Norm_f_Omega} and the following estimates
\begin{equation}\label{1Jump_1_Der}
\Big|\frac{\mathrm{d} y}{\mathrm{d}\gamma_n^\varepsilon}(a_n^\varepsilon)-\frac{\mathrm{d} y}{\mathrm{d}\gamma_n}(a)\Big|
\leq \int_a^{a_n^\varepsilon}|y''|\,\mathrm{d}\gamma_n\leq c_3 \varepsilon^\frac12\|y\|_
{\mathscr{W}_2^2(\Gamma)}\leq
c_4\varepsilon^\frac12\|f\|
\end{equation}
for $n=1,2,3$.

Observe that the restriction $u_{\varepsilon,\omega_1}$ of $u_{\varepsilon}$ onto $\omega_1$ is a solution of the Cauchy problem
\begin{equation*}
\begin{cases}
-u''+\alpha Qu=\varepsilon f(\varepsilon\,\cdot)\quad\mathrm{on}\quad\omega_1,\\
\phantom{-}u(a_1)=0,\quad\frac{\mathrm{d} u}{\mathrm{d}\omega_1}(a_1)=\kappa_\varepsilon;
\end{cases}
\end{equation*}
thus, using \eqref{0Norm_f_Omega} and
properties of solutions
of this problem \cite{Gol2}, we get the estimate
\begin{equation}\label{u_eps_omega_1}
\|u_\varepsilon\|_{\mathscr{W}_2^2(\omega_1)}\leq
c_5
\big(
|\kappa_\varepsilon|+
\varepsilon\|f(\varepsilon\,\cdot)\|_{\mathscr{L}_2(\Omega)}
\big)
\leq c_6\|f\|.
\end{equation}

Next, we claim that $\alpha$ does not belong to the intersection of spectra of the following problems
\begin{equation*}
\begin{cases}
-f''+\alpha Qf=0\quad\mathrm{on}\quad\omega_2,\\
\phantom{-}f(a)=0,\quad\frac{\mathrm{d} f}{\mathrm{d}\omega_2}(a_2)=0,
\end{cases}
\qquad
\begin{cases}
-f''+\alpha Qf=0\quad\mathrm{on}\quad\omega_3,\\
\phantom{-}f(a)=0,\quad\frac{\mathrm{d} f}{\mathrm{d}\omega_3}(a_3)=0.
\end{cases}
\end{equation*}
Assume the contrary, i.e., let there exist nonzero solutions of the above problems.
From these eigenfunctions one can construct in a straightforward manner an eigenfunction of the problem \eqref{Resonant_Set}
vanishing on $\omega_1$, which is impossible in view of the equality $\phi_\alpha(a_1)=1$.
Without loss of generality we suppose that
$\alpha$ does not belong to the spectrum of the problem on $\omega_2$.
Therefore the nonhomogenous problem
\begin{equation*}
\begin{cases}
-u''+\alpha Qu=\varepsilon f(\varepsilon\,\cdot)\quad\mathrm{on}\quad\omega_2,\\
\phantom{-}u(a)=u_{\varepsilon,\omega_1}(a),\quad\frac{\mathrm{d} u}{\mathrm{d}\omega_2}(a_2)=
\frac{\mathrm{d} y}{\mathrm{d}\gamma_2^\varepsilon}(a_2^\varepsilon)
\end{cases}
\end{equation*}
admits a unique solution which coincides with $u_{\varepsilon,\omega_2}$.
Moreover,
\begin{equation}\label{u_eps_omega_2}
\|u_\varepsilon\|_{\mathscr{W}_2^2(\omega_2)}\leq
c_7
\Big(
|u_{\varepsilon,\omega_1}(a)|
+
\Big|\frac{\mathrm{d} y}{\mathrm{d}\gamma_2^\varepsilon}(a_2^\varepsilon)\Big|
+
\varepsilon\|f(\varepsilon\,\cdot)\|_{\mathscr{L}_2(\Omega)}
\Big)
\leq c_8\|f\|
\end{equation}
by the a priori estimate of $u_{\varepsilon,\omega_2}$, \eqref{0Norm_f_Omega} and \eqref{u_eps_omega_1}.

Next, the Cauchy problem
\begin{equation*}
\begin{cases}
-u''+\alpha Qu=\varepsilon f(\varepsilon\,\cdot)\quad\mathrm{on}\quad\omega_3,\\
\phantom{-}u(a)=u_{\varepsilon,\omega_1}(a),\quad\frac{\mathrm{d} u}{\mathrm{d}\omega_3}(a)=-\frac{\mathrm{d} u_\varepsilon}{\mathrm{d}\omega_1}(a)-\frac{\mathrm{d} u_\varepsilon}{\mathrm{d}\omega_2}(a),
\end{cases}
\end{equation*}
gives us $u_{\varepsilon,\omega_3}$.
From this, using  \eqref{u_eps_omega_1} and \eqref{u_eps_omega_2}, we find that
\begin{equation*}
\|u_\varepsilon\|_{\mathscr{W}_2^2(\omega_3)}\leq
c_9
\Big(
|u_{\varepsilon,\omega_1}(a)|
+
\Big|\frac{\mathrm{d} u_\varepsilon}{\mathrm{d}\omega_1}(a)\Big|
+
\Big|\frac{\mathrm{d} u_\varepsilon}{\mathrm{d}\omega_2}(a)\Big|
+
\varepsilon\|f(\varepsilon\,\cdot)\|_{\mathscr{L}_2(\Omega)}
\Big)
\leq c_{10}\|f\|.
\end{equation*}
Combining the above estimate with \eqref{u_eps_omega_1} and \eqref{u_eps_omega_2}, we arrive at the desired inequality.
\end{proof}

Recall that $\chi_\varepsilon$ is the characteristic function of $\Omega_\varepsilon$ and put
\[
v_\varepsilon:=(1-\chi_\varepsilon)y+
\chi_\varepsilon\big[y(a_1^\varepsilon)\phi_\alpha(\varepsilon^{-1}\cdot)+\varepsilon u_\varepsilon(\varepsilon^{-1}\cdot)\big],
\]
i.e., $v_\varepsilon=y$ on $\Gamma\setminus\Omega_\varepsilon$ and $v_\varepsilon=y(a_1^\varepsilon)\phi_\alpha(\varepsilon^{-1}\cdot)+\varepsilon u_\varepsilon(\varepsilon^{-1}\cdot)$ on $\Omega_\varepsilon$.
We now estimate the jumps of $v_\varepsilon$ and its first derivative at the points $a_n^\varepsilon$.
Direct calculations show that
\[
[v_\varepsilon]_{a_n^\varepsilon}=
y(a_n^\varepsilon)-\theta_ny(a_1^\varepsilon)-\varepsilon u_\varepsilon(a_n),\qquad
[v'_\varepsilon]_{a_n^\varepsilon}=
\frac{\mathrm{d} y}{\mathrm{d}\gamma_n^\varepsilon}(a_n^\varepsilon)-\frac{\mathrm{d} u_\varepsilon}{\mathrm{d}\omega_n}(a_n).
\]
Using Lemma~\ref{Lemma_Simple_Resonant}, \eqref{1Jump_1_Der},
the relations $y_{\gamma_n}(a)=\theta_n\,y_{\gamma_1}(a)$,
and the estimates
\begin{equation}\label{1Jump_Y}
|y(a_n^\varepsilon)- y_{\gamma_n}(a)|
\leq \int_a^{a_n^\varepsilon}|y'|\,\mathrm{d}\gamma_n\leq c_1 \varepsilon^\frac12\|y\|_
{\mathscr{W}_2^2(\Gamma)}\leq
c_2\varepsilon^\frac12\|f\|
\end{equation}
holding for $n=1,2,3$, we arrive at the bounds for the jumps
\begin{align}
\label{1W_Jamps}
\begin{aligned}
|[v_\varepsilon]_{a_n^\varepsilon}|\leq&\, |y(a_n^\varepsilon)- y_{\gamma_n}(a)|+
|\theta_n|\,|y(a_1^\varepsilon)- y_{\gamma_1}(a)|+\varepsilon\,\|u_\varepsilon\|_{\mathscr{W}_2^2(\Omega)}\leq c_3\varepsilon^\frac12\|f\|,\\
|[v'_\varepsilon]_{a_n^\varepsilon}|\leq&\,
\Big|\frac{\mathrm{d} y}{\mathrm{d}\gamma_n^\varepsilon}(a_n^\varepsilon)-\frac{\mathrm{d} y}{\mathrm{d}\gamma_n}(a)\Big|+
\Big|\frac{\mathrm{d} y}{\mathrm{d}\gamma_n}(a_n)-\frac{\mathrm{d} u_\varepsilon}{\mathrm{d}\omega_n}(a_n)\Big|
\leq c_4\varepsilon^\frac12\|f\|.
\end{aligned}
\end{align}

Introduce now the function $\eta_{1,n}^\varepsilon$, supported by $[a_n^\varepsilon,a_n^{\varepsilon+1}]$, that is smooth outside the point $a_n^\varepsilon$, linear on $[a_n^\varepsilon,a_n^{\varepsilon+\frac12}]$ with $\eta_{1,n}^\varepsilon(a_n^\varepsilon)=0$ and $\eta_{1,n}^\varepsilon(a_n^{\varepsilon+\frac12})=\frac12$.
Put
\[
w_\varepsilon(x):=\sum_{n=1}^3([v_\varepsilon]_{a_n^\varepsilon}\eta_{0,n}^\varepsilon(x)+
[v'_\varepsilon]_{a_n^\varepsilon}\eta_{1,n}^\varepsilon(x)).
\]
Inequalities \eqref{1W_Jamps} imply that
\[
\max\limits_{x\in\Omega_{\varepsilon+1}\setminus\Omega_\varepsilon}
|w_\varepsilon^{(n)}(x)|\leq
c_5\sqrt\varepsilon\,\|f\|
\]
for $n=0,1,2$.
By construction,
\[
\tilde{y}_\varepsilon:=v_\varepsilon-w_\varepsilon=
\begin{cases}
y-w_\varepsilon&\text{on}\quad \Gamma\setminus\Omega_\varepsilon,\\
y(a_1^\varepsilon)\phi_\alpha(\varepsilon^{-1}\cdot)+\varepsilon\, u_\varepsilon(\varepsilon^{-1}\cdot)&\text{on}\quad \Omega_\varepsilon
\end{cases}
\]
is a $\mathscr{C}^1(\Gamma)$-function and belongs to the domain of $\mathrm{H}_\varepsilon(\alpha,Q)$.

The rest of the proof of Theorem 1 (i.e., proof of \eqref{Ineq_Approx}) in the simple resonant cases is similar to that in the non-resonant case and is therefore omitted.

\subsection{Proof of Theorem 1 in the double resonant case}
Finally, we establish Theorem 1 in the case when $\alpha$ is a double eigenvalue of problem \eqref{Resonant_Set}.
Let $\varphi_\alpha$ and $\psi_\alpha$ be a pair of linearly independent eigenfunctions of problem \eqref{Resonant_Set} cor\-responding to the eigenvalue $\alpha$.
Let $u_\varepsilon$ be any solution of the following problem
\begin{equation*}
\begin{cases}
-u''+\alpha Qu=\varepsilon f(\varepsilon\,\cdot)\quad\mathrm{on}\quad\Omega,\qquad
u\in \mathcal{K}(\Omega;a),\\
\phantom{-}\frac{\mathrm{d} u}{\mathrm{d}\omega_1}(a_1)=\mu_\varepsilon
,\quad
\frac{\mathrm{d} u}{\mathrm{d}\omega_2}(a_2)=\nu_\varepsilon
,\quad
\frac{\mathrm{d} u}{\mathrm{d}\omega_3}(a_3)=\frac{\mathrm{d} y}{\mathrm{d}\gamma_3^\varepsilon}(a_3^\varepsilon),
\end{cases}
\end{equation*}
where
\begin{align}\label{2MuNu_eps}
\begin{aligned}
\mu_\varepsilon&=
\frac{\theta_1}{\theta_3}
\frac{\mathrm{d} y}{\mathrm{d}\gamma_3^\varepsilon}(a_3^\varepsilon)
+
\frac\varepsilon{\theta_3}
\int_\Omega \big(\psi_\alpha(a_2)\varphi_\alpha(t)
-\varphi_\alpha(a_2)\psi_\alpha(t)\big)f(\varepsilon t)\,\mathrm{d}\Omega,\\
\nu_\varepsilon&=
\frac{\theta_2}{\theta_3}
\frac{\mathrm{d} y}{\mathrm{d}\gamma_3^\varepsilon}(a_3^\varepsilon)
+
\frac\varepsilon{\theta_3}
\int_\Omega \big(\varphi_\alpha(a_1)\psi_\alpha(t)
-\psi_\alpha(a_1)\varphi_\alpha(t)\big)f(\varepsilon t)\,\mathrm{d}\Omega.
\end{aligned}
\end{align}
This problem admits solutions in view of the Fredholm alternative.
We fix $u_\varepsilon$ by the conditions $u_\varepsilon(a_1)=u_\varepsilon(a_2)=0$.
The following statement is similar to Lemma~\ref{Lemma_Simple_Resonant}.

\begin{lemma}\label{Lemma_Double_Resonant}
Suppose that $f\in \mathscr{L}_2(\Gamma)$ and $\varepsilon\in(0,1)$.
Then
\[
\Big|\mu_\varepsilon-\frac{\mathrm{d} y}{\mathrm{d}\gamma_1}(a)\Big|
\leq
C_1\varepsilon^\frac12\|f\|,
\qquad
\Big|\nu_\varepsilon-\frac{\mathrm{d} y}{\mathrm{d}\gamma_2}(a)\Big|
\leq
C_2\varepsilon^\frac12\|f\|,
\qquad
\|u_\varepsilon\|_{\mathscr{W}_2^2(\Omega)}\leq C_3\|f\|.
\]
\end{lemma}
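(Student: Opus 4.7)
I would follow the architecture of Lemma~\ref{Lemma_Simple_Resonant}, replacing the single Fredholm compatibility relation there by a pair of relations reflecting the two-dimensional kernel of \eqref{Resonant_Set}. The bounds $\|y\|_{\mathscr{W}_2^2(\Gamma)}\le c_1\|f\|$ and $\|y\|_{\mathscr{C}^1(\Gamma)}\le c_2\|f\|$ follow, as before, from boundedness of $(\mathrm{H}(\alpha,Q)-\zeta)^{-1}$ together with the Sobolev embedding $\mathscr{W}_2^2(\Gamma)\subset\mathscr{C}^1(\Gamma)$. For the first two inequalities in the lemma the key point is to exploit the interface conditions that $y$ satisfies in the domain of $\mathrm{H}(\alpha,Q)$ in the double resonant case, namely $\theta_3\frac{\mathrm{d} y}{\mathrm{d}\gamma_n}(a)=\theta_n\frac{\mathrm{d} y}{\mathrm{d}\gamma_3}(a)$ for $n=1,2$. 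Subtracting these from the respective lines of \eqref{2MuNu_eps} reduces the estimate of $\mu_\varepsilon-\frac{\mathrm{d} y}{\mathrm{d}\gamma_1}(a)$ and $\nu_\varepsilon-\frac{\mathrm{d} y}{\mathrm{d}\gamma_2}(a)$ to controlling the difference $\frac{\theta_n}{\theta_3}\bigl(\frac{\mathrm{d} y}{\mathrm{d}\gamma_3^\varepsilon}(a_3^\varepsilon)-\frac{\mathrm{d} y}{\mathrm{d}\gamma_3}(a)\bigr)$ via \eqref{1Jump_1_Der}, together with the two integral remainders, which by Cauchy--Schwarz and \eqref{0Norm_f_Omega} are each of order $\varepsilon^{1/2}\|f\|$.

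For the $\mathscr{W}_2^2$-bound on $u_\varepsilon$ I would proceed edge by edge. On $\omega_1$ the restriction $u_{\varepsilon,\omega_1}$ solves a standard Cauchy problem with data $u(a_1)=0$ and $\frac{\mathrm{d} u}{\mathrm{d}\omega_1}(a_1)=\mu_\varepsilon$, so the elementary a priori estimate for such a problem, combined with the just-obtained bound on $|\mu_\varepsilon|$, yields $\|u_\varepsilon\|_{\mathscr{W}_2^2(\omega_1)}\le c\|f\|$. An entirely parallel argument on $\omega_2$, using $u(a_2)=0$ and $\frac{\mathrm{d} u}{\mathrm{d}\omega_2}(a_2)=\nu_\varepsilon$, produces $\|u_\varepsilon\|_{\mathscr{W}_2^2(\omega_2)}\le c\|f\|$. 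On the remaining edge $\omega_3$ the Cauchy data at the vertex $a$ are determined by the Kirchhoff conditions: $u_\varepsilon(a)=u_{\varepsilon,\omega_1}(a)$ and $\frac{\mathrm{d} u_\varepsilon}{\mathrm{d}\omega_3}(a)=-\frac{\mathrm{d} u_\varepsilon}{\mathrm{d}\omega_1}(a)-\frac{\mathrm{d} u_\varepsilon}{\mathrm{d}\omega_2}(a)$, both of which are controlled by $\|f\|$ via the previous two bounds and the embedding $\mathscr{W}_2^2(\omega_n)\subset\mathscr{C}^1(\omega_n)$. One further application of the Cauchy a priori estimate on $\omega_3$ then finishes the $\mathscr{W}_2^2$-estimate.

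The subtlety I anticipate as the main obstacle is the tacit consistency of this edgewise assembly: one must check that $u_{\varepsilon,\omega_1}(a)=u_{\varepsilon,\omega_2}(a)$ and that the resulting $u_{\varepsilon,\omega_3}$ automatically fulfils the prescribed Neumann condition $\frac{\mathrm{d} u}{\mathrm{d}\omega_3}(a_3)=\frac{\mathrm{d} y}{\mathrm{d}\gamma_3^\varepsilon}(a_3^\varepsilon)$. These two scalar relations are exactly the Fredholm compatibility conditions for the overdetermined boundary value problem defining $u_\varepsilon$, and they are in fact the very source of the explicit formulas \eqref{2MuNu_eps}: multiplying the equation by $\varphi_\alpha$ and by $\psi_\alpha$, integrating by parts over $\Omega$, and using Kirchhoff for $u_\varepsilon$ together with the vanishing Neumann data of the eigenfunctions at each $a_n$, one obtains a $2\times 2$ linear system in $(\mu_\varepsilon,\nu_\varepsilon)$ of determinant $\theta_3$ whose Cramer solution is precisely \eqref{2MuNu_eps}. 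Hence the construction is internally consistent, and the preceding estimates deliver the claimed inequality.
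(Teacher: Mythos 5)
Your proposal is correct and follows essentially the same route as the paper: the same $\mathscr{W}_2^2$/$\mathscr{C}^1$ bounds on $y$, the same subtraction of the interface relations $\theta_3\frac{\mathrm{d} y}{\mathrm{d}\gamma_n}(a)=\theta_n\frac{\mathrm{d} y}{\mathrm{d}\gamma_3}(a)$ from \eqref{2MuNu_eps} combined with \eqref{1Jump_1_Der} and \eqref{0Norm_f_Omega}, and the same edgewise Cauchy-problem estimates on $\omega_1$, $\omega_2$ and then $\omega_3$ via the Kirchhoff data at $a$. Your closing remark on the consistency of the edgewise assembly (the Fredholm compatibility system of determinant $\theta_3$ producing \eqref{2MuNu_eps}) is accurate and in fact makes explicit a point the paper leaves to the phrase ``this problem admits solutions in view of the Fredholm alternative.''
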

\begin{proof}
In a similar manner as in the previous subsections, one concludes that
\[
\|y\|_{\mathscr{W}_2^2(\Gamma)}\leq c_1\|f\|,\qquad
\|y\|_{\mathscr{C}^1(\Gamma)}\leq c_2\|f\|.
\]
Subtracting the relation $\frac{\mathrm{d} y}{\mathrm{d}\gamma_1}(a)=
\frac{\theta_1}{\theta_3}\frac{\mathrm{d} y}{\mathrm{d}\gamma_3}(a)$
from \eqref{2MuNu_eps} yields
\begin{align*}
\Big|\frac{\mathrm{d} u_\varepsilon}{\mathrm{d}\omega_1}(a_1)-\frac{\mathrm{d} y}{\mathrm{d}\gamma_1}(a)\Big|\leq
\Big|\frac{\theta_1}{\theta_3}\Big|
\Big|\frac{\mathrm{d} y}{\mathrm{d}\gamma_3^\varepsilon}(a_3^\varepsilon)-\frac{\mathrm{d} y}{\mathrm{d}\gamma_3}(a)\Big|
+c_3\varepsilon\|f(\varepsilon\,\cdot)\|_{L_2(\Omega)}\leq
C_1\varepsilon^\frac12\|f\|.
\end{align*}
Similarly, with
$\frac{\mathrm{d} y}{\mathrm{d}\gamma_2}(a)=
\frac{\theta_2}{\theta_3}\frac{\mathrm{d} y}{\mathrm{d}\gamma_3}(a)$,
\[
\Big|\frac{\mathrm{d} u_\varepsilon}{\mathrm{d}\omega_2}(a_2)-\frac{\mathrm{d} y}{\mathrm{d}\gamma_2}(a)\Big|\leq
\Big|\frac{\theta_2}{\theta_3}\Big|
\Big|\frac{\mathrm{d} y}{\mathrm{d}\gamma_3^\varepsilon}(a_3^\varepsilon)-\frac{\mathrm{d} y}{\mathrm{d}\gamma_3}(a)\Big|
+c_4\varepsilon\|f(\varepsilon\,\cdot)\|_{L_2(\Omega)}\leq
C_2\varepsilon^\frac12\|f\|.
\]

By construction, the restrictions of $u_{\varepsilon}$ on $\omega_1$ and $\omega_2$ obey the Cauchy problems
\begin{equation*}
\begin{cases}
-u''+\alpha Qu=\varepsilon f(\varepsilon\,\cdot)\quad\mathrm{on}\quad\omega_1,\\
\phantom{-}u(a_1)=0,\quad\frac{\mathrm{d} u}{\mathrm{d}\omega_1}(a_1)=\mu_\varepsilon,
\end{cases}
\qquad
\begin{cases}
-u''+\alpha Qu=\varepsilon f(\varepsilon\,\cdot)\quad\mathrm{on}\quad\omega_2,\\
\phantom{-}u(a_2)=0,\quad\frac{\mathrm{d} u}{\mathrm{d}\omega_2}(a_2)=\nu_\varepsilon;
\end{cases}
\end{equation*}
then, since \eqref{0Norm_f_Omega} holds, we arrive at
\begin{align}\label{u_eps_omega_12}
\begin{aligned}
\|u_\varepsilon\|_{\mathscr{W}_2^2(\omega_1)}&\leq
c_5\big(
|\mu_\varepsilon|+
\varepsilon\|f(\varepsilon\,\cdot)\|_{\mathscr{L}_2(\Omega)}
\big)
\leq c_6\|f\|,\\
\|u_\varepsilon\|_{\mathscr{W}_2^2(\omega_2)}&\leq
c_7\big(
|\nu_\varepsilon|+
\varepsilon\|f(\varepsilon\,\cdot)\|_{\mathscr{L}_2(\Omega)}
\big)
\leq c_8\|f\|.
\end{aligned}
\end{align}
We see that the function $u_{\varepsilon,\omega_3}$ is a solution of the initial problem
\begin{equation*}
\begin{cases}
-u''+\alpha Qu=\varepsilon f(\varepsilon\,\cdot)\quad\mathrm{on}\quad\omega_3,\\
\phantom{-}u(a)=u_{\varepsilon,\omega_1}(a),\quad\frac{\mathrm{d} u}{\mathrm{d}\omega_3}(a)=-\frac{\mathrm{d} u_\varepsilon}{\mathrm{d}\omega_1}(a)-\frac{\mathrm{d} u_\varepsilon}{\mathrm{d}\omega_2}(a)
\end{cases}
\end{equation*}
and that, moreover,
\begin{equation*}
\|u_\varepsilon\|_{\mathscr{W}_2^2(\omega_3)}\leq
c_9
\Big(
|u_{\varepsilon,\omega_1}(a)|
+
\Big|\frac{\mathrm{d} u_\varepsilon}{\mathrm{d}\omega_1}(a)\Big|
+
\Big|\frac{\mathrm{d} u_\varepsilon}{\mathrm{d}\omega_2}(a)\Big|
+
\varepsilon\|f(\varepsilon\,\cdot)\|_{\mathscr{L}_2(\Omega)}
\Big)
\leq c_{10}\|f\|
\end{equation*}
in view of \eqref{0Norm_f_Omega} and \eqref{u_eps_omega_12}.
The proof is complete.
\end{proof}

We set
\begin{align*}
v_\varepsilon:=\,
&(1-\chi_\varepsilon)y+
\frac{\chi_\varepsilon}{\theta_3}
\big(
\big\{\psi_\alpha(a_1)y(a_2^\varepsilon)
-\psi_\alpha(a_2)y(a_1^\varepsilon)\big\}
\varphi_\alpha(\varepsilon^{-1}\cdot)
\\
&+
\big\{\varphi_\alpha(a_2)y(a_1^\varepsilon)
-\varphi_\alpha(a_1)y(a_2^\varepsilon)\big\}
\psi_\alpha(\varepsilon^{-1}\cdot)
+\theta_3\varepsilon u_\varepsilon(\varepsilon^{-1}\cdot)\big).
\end{align*}
Straightforward calculations show that
\begin{align*}
[v_\varepsilon]_{a_n^\varepsilon}=&\,
y(a_n^\varepsilon)
-\frac{y(a_1^\varepsilon)}{\theta_3}
\big(\psi_\alpha(a_n)\varphi_\alpha(a_2)
-\psi_\alpha(a_2)\varphi_\alpha(a_n)\big)\\
&-\frac{y(a_2^\varepsilon)}{\theta_3}
\big(\psi_\alpha(a_1)\varphi_\alpha(a_n)
-\psi_\alpha(a_n)\varphi_\alpha(a_1)\big)
-\varepsilon u_\varepsilon(a_n),
\\
[v'_\varepsilon]_{a_n^\varepsilon}=&\,
\frac{\mathrm{d} y}{\mathrm{d}\gamma_n^\varepsilon}(a_n^\varepsilon)-\frac{\mathrm{d} u_\varepsilon}{\mathrm{d}\omega_n}(a_n).
\end{align*}
In view of Lemma~\ref{Lemma_Double_Resonant}, \eqref{1Jump_1_Der},
\eqref{1Jump_Y}, and the relation
$
y_{\gamma_1}(a)=
-
\frac{\theta_1}{\theta_2}y_{\gamma_2}(a)
-
\frac{\theta_1}{\theta_3}y_{\gamma_3}(a)
$,
we get
\begin{align*}
|[v_\varepsilon]_{a_n^\varepsilon}|\leq
&\, |y(a_n^\varepsilon)- y_{\gamma_n}(a)|+
\Big|\frac{\theta_1}{\theta_3}\Big|
|y(a_1^\varepsilon)- y_{\gamma_1}(a)|
\\
&+\Big|\frac{\theta_2}{\theta_3}\Big|
|y(a_2^\varepsilon)- y_{\gamma_2}(a)|
+\varepsilon\,\|u_\varepsilon\|_{\mathscr{W}_2^2(\Omega)}
\leq c_3\varepsilon^\frac12\|f\|,\\
|[v'_\varepsilon]_{a_n^\varepsilon}|\leq&\,
\Big|\frac{\mathrm{d} y}{\mathrm{d}\gamma_n^\varepsilon}(a_n^\varepsilon)-\frac{\mathrm{d} y}{\mathrm{d}\gamma_n}(a)\Big|+
\Big|\frac{\mathrm{d} y}{\mathrm{d}\gamma_n}(a_n)-\frac{\mathrm{d} u_\varepsilon}{\mathrm{d}\omega_n}(a_n)\Big|
\leq c_4\varepsilon^\frac12\|f\|.
\end{align*}
Now we set
\[
\tilde{y}_\varepsilon:=
\begin{cases}
y-w_\varepsilon&\text{on}\quad \Gamma\setminus\Omega_\varepsilon,\\
\frac{\psi_\alpha(a_1)y(a_2^\varepsilon)
-\psi_\alpha(a_2)y(a_1^\varepsilon)}{\theta_3}
\varphi_\alpha(\varepsilon^{-1}\cdot)\\
+
\frac{\varphi_\alpha(a_2)y(a_1^\varepsilon)
-\varphi_\alpha(a_1)y(a_2^\varepsilon)}{\theta_3}
\psi_\alpha(\varepsilon^{-1}\cdot)
+\varepsilon u_\varepsilon(\varepsilon^{-1}\cdot)
&\text{on}\quad \Omega_\varepsilon
\end{cases}
\]
with
\[
w_\varepsilon(x):=\sum_{n=1}^3([v_\varepsilon]_{a_n^\varepsilon}\eta_{0,n}^\varepsilon(x)+
[v'_\varepsilon]_{a_n^\varepsilon}\eta_{1,n}^\varepsilon(x)).
\]
The rest of the proof runs as before.

\section*{Acknowledgments}
The author expresses his gratitude to Professors Yuriy Golovaty and Rostyslav Hryniv for stimulating and fruitful discussions.

\end{document}